\newcommand{\R}{\mathbb{R}}
\newtheorem{theorem}{Theorem}[section]
\newtheorem{lemma}[theorem]{Lemma}
\title{Fourier transforms of indicator functions, lattice point discrepancy, and the stability of integrals}
\author{Michael Greenblatt}
\date{\today}
\newcommand\blfootnote[1]{%
  \begingroup
  \renewcommand\thefootnote{}\footnote{#1}%
  \addtocounter{footnote}{-1}%
  \endgroup
}
\begin{document}
\maketitle
\begin{abstract} 
We prove sharp estimates for Fourier transforms of indicator functions of bounded open sets in $\R^n$ with real analytic boundary,
 as well as nontrivial lattice point discrepancy results. Both are derived from estimates on Fourier transforms of hypersurface measures.  Relations 
with maximal averages are discussed, connecting two conjectures of Iosevich and Sawyer from [ISa1].  We also 
prove a theorem concerning the stability under function perturbations of the growth rate of a real analytic function near a zero. This result is sharp in an appropriate sense. It implies a corresponding stability result for the local integrablity of negative powers of a real analytic function near a zero.

\end{abstract}
\blfootnote{ 2010 {\it Mathematics Subject Classification}: 42B20, 26E05, 11P21, 11K38}
\blfootnote{{\it Key words}: Fourier transform, stability of integrals, lattice point discrepancy, maximal averages}

\section{Background and Statements of Results}

\subsection{Preliminaries}

Let $D$ be a bounded open set in $\R^{n+1}$ for some $n \geq 1$ whose boundary $\partial D$ is a real analytic hypersurface. By this we mean that if $x \in \partial D$ then there is a neighborhood $N_x$ of $x$ such that $N_x \cap \partial D$ is the graph of a real analytic function of some $n$ of the coordinate variables, and such that $N_x \cap D$ is the portion of $N_x$ that is on one side of this graph.

Our theorems will be stated in terms of an index we define as follows. Given $x_0 \in \partial D$, we perform a translation and rotation
taking $x_0$ to the origin after which the $\partial D$ can be represented as a graph of some $f(x)$ with $f(0) = 0$ and $\nabla f(0) = (0,...,0)$.
Then by resolution of singularities (see [AGuV] chapter 7 for a discussion of such facts and [G1] for elementary proofs of closely related statements) there is an $r_0 > 0$ such that there is a 
$g_{x_0} > 0$ and an integer $0 \leq k_{x_0} \leq n-1$ such that  if $r < r_0$ one has an asymptotic expansion of the following form, where $m$ 
denotes Lebesgue measure and $c_{r,x_0} > 0$.
\[ m(\{x: |x| < r, |f(x)| < \epsilon\}) = c_{r,x_0} \epsilon^{g_{x_0}} |\ln \epsilon|^{k_{x_0}} + o (\epsilon^{g_{x_0}} |\ln \epsilon|^{k_{x_0}})
\tag{1.1}\]
Note that by the relation between distribution functions and integrability of functions, $(1.1)$ implies that $|f(x)|^{-\delta}$ is integrable on some neighborhood of the origin for $\delta < g_{x_0}$ and is not integrable on any neighborhood
of the origin when $\delta \geq g_{x_0}$.

To give a better idea what the index $g_{x_0}$ means, suppose $f(x)$ has a zero of order $l \geq 2$ at the origin. Then there is some direction $v$ such
that the directional derivative $\partial_v^l f(x)$ is nonzero on a neighborhood of the origin. Then by the  measure Van der Corput lemma (Lemma 2.3) in the $v$ direction, there is a constant $C$ such that on some neighborhood of the origin, on each interval in the $v$ direction the measure of the points
for which $|f(x)| > \epsilon$ is at most $C\epsilon^{1 \over l}$. Thus integrating over the remaining $n-1$ directions we have that the exponent
$g_{x_0}$ is at least ${1 \over l}$. In the other extreme, if $f(x)$ has nonzero Hessian determinant at the origin, en elementary argument gives 
$g_{x_0} = {n \over 2}$. Thus $g_{x_0}$ is a measure of how flat $\partial D$ is at $x_0$, with a smaller value of $g_{x_0}$ corresponding to a flatter surface.

\noindent We define the index $g$ by
\[g = \inf_{x \in \partial D} g_x \tag{1.2}\]
Our results will be expressed in terms of the index $g$.  

There is also a natural geometric characterization of the index $g$ when $g \leq 1$. Let $T_{x_0}(\partial D)$ denote the tangent plane to $\partial D$ at $x_0$ and let $d(x, T_{x_0}(\partial D))$ denote the Euclidean distance 
from $x$ to this tangent plane. Let $\mu$ denote the Euclidean surface measure for the surface $\partial D$. Then by $(1.1)$ the index $g_{x_0}$ is equal
to $\sup \{h: \int_{\partial D \cap N} d(x, T_{x_0}(\partial D))^{-h} d\mu < \infty\}$ for any sufficiently small neighborhood $N$ of $x_0$.
Note that if $P$ is any other hyperplane containing $x_0$, then $\sup \{h: \int_{\partial D \cap N} d(x,P)^{-h} d\mu < \infty \}= 1$ for sufficiently small $N$, and if $P$ is a hyperplane not containing  $x_0$, then this supremum is infinity for sufficiently small $N$.
The above considerations combined with a compactness argument imply that if $g \leq 1$, then we have the following alternate definition.
\[g = \sup \{h:\int_{\partial D} d(x, P)^{-h} d\mu < \infty{\rm\,\, for\,\,all\,\,hyperplanes\,\,} P \} \tag{1.2'}\]

\subsection{Lattice point discrepancy}

We consider the situation where $D$ can be written in polar form as $\{(r, \omega) \in [0,\infty) \times S^n : r < h(\omega)\}$ where $h$ is a positive 
real analytic function on $S^n$. The classes of $D$ of Examples 1 and 2 described in section 1.4 give some examples of domains of this form. For $s > 0$ let $sD$ denote the dilate of $D$ given by $\{sx: x \in D\}$. Let $N(s)$ denote
the number of lattice points contained inside the closure of $sD$. Then $N(s) \sim s^{n+1}m(D)$ for large $s$, and a simple geometric argument further
gives that for some constant $C$ one has the estimate  $|N(s) - s^{n+1}m(D)| < Cs^n$. If $D$ were say a polyhedral region with rational
vertices instead of a domain 
with real analytic boundary, it is not hard to show that the flatness of the sides of $D$ would cause the exponent $n$ in $Cs^n$ to be best possible.

It turns out that the curvature of $\delta D$ in the situations at hand leads to stronger estimates on this discrepancy
function $|N(s) - s^{n+1}m(D)|$. In fact, well-known methods (see p.383-384 of [ShS]) imply  the following. Suppose $S$ is a smooth compact surface in 
$\R^{n+1}$ bounding an open set $U$ such that there
are constants $c, \delta_0 > 0$ such that whenever $\delta < \delta_0$, if $x \in U$ and $|y| < \delta$, then $x + y \in (1 + c\delta) U$. Then 
if the Euclidean surface measure $\nu$ on $S$ satisfies $|\hat{\nu}(\xi)| = O(|\xi|^{-\alpha})$, the lattice point discrepancy satisfies 
$N(s) - s^{n+1}m(U) = O(s^{n - {\alpha \over n + 1 - \alpha}})$.

Since the domains $D$ being considered here satisfy the above condition, as an immediate consequence of Theorem 1.3 of this paper in conjunction with
 a partition of unity on $\partial D$, we have the following. \vskip 0.2 in

\begin{theorem}

Suppose $D = \{(r, \omega) \in [0,\infty) \times S^n : r < h(\omega)\}$ where $h$ is a positive 
real analytic function on $S^n$. Let $g$ be as in $(1.2)$, and let $k = \max \{k_x: x \in \partial D$ with $g_x = g\}$.

\begin{enumerate}

\item If $g < {1 \over n + 1}$ and $k = 0$, then we have the estimate $N(s) - s^{n+1}m(D)
= O( s^{n - {g  \over n +1 - g}})$. 

\item If $g = {1 \over n + 1}$ or if $g < {1 \over n + 1}$ and $k > 0$, then for each $\epsilon > 0$ we have 
$N(s) - s^{n+1}m(D) =  O(s^{n - {g  \over n +1 - g} + \epsilon})$. 

\item If $g > {1 \over n + 1}$ then we have the estimate
$N(s) - s^{n+1}m(D) = O(s^{n - {1  \over n^2 + 2n}})$. 

\end{enumerate}
\end{theorem}

Finding the optimal exponent for the lattice point discrepancy is a notoriously difficult problem, and even for the unit disk in two dimensions the optimal 
exponent is unknown. 
The famous (and unsolved) Gauss circle problem is to show that for the disk one has an estimate  $N(s)- \pi s^2 = O( s^{{1 \over 2} + \epsilon})$ for any 
positive $\epsilon$; it was shown by G.H. Hardy [Ha1] [Ha2] that one does not have an estimate of the form $N(s)- \pi s^2 = O(  s^{{1 \over 2}})$. As of this writing the best exponent for which an estimate $N(s)- \pi s^2 = O(  s^\eta)$  is known is  $\eta = {517 \over 824}$, due to Bourgain and Watt [BWa].

 Due to the extensive history of the subject we can only give some of the highlights of the known lattice point discrepancy results. Much of the following
 history is taken from [IvKrKuNo].
For the disk in two dimensions, the exponent $\eta$ provided by the above-mentioned surface measure Fourier transform argument is 
$\eta = {2 \over 3}$, a result 
discovered by Sierpi\'nski  [Si]. One can improve on this exponent using expressions involving Bessel functions for discrepancy function $N(s)- \pi s^2$. There
were a number of papers in this direction, culminating in the paper [Ko] of Kolesnik which gave the exponent $\eta = {278 \over 429} + \epsilon$ for any
$\epsilon > 0$. In the 1980's, papers of Bombieri and Iwaniec [BoIw] and Iwaniec and Mozzochi [IwMo] developed a new approach to such problems 
which spurred further development. For some time, the best result expanding on this approach was the paper of Huxley [Hu3], giving the exponent
$\eta = {131 \over 208} + \epsilon$, but the recent above-mentioned preprint of Bourgain and Watt [BWa] provides the current best exponent of 
${517 \over 824}$.

For spheres in three dimensions, an early result of Landau [L] showed that $N(s) - s^3m(D)  = O(s^{3 \over 2})$. It was shown in several papers of Vinogradov, culminating in [Vi], that one has  $N(s) - s^3m(D)  = 
O(s^{{4 \over 3} + \epsilon})$. This exponent has been improved since then, for example in the paper of Heath-Brown [He] that gives an exponent of
${21 \over 16} + \epsilon$. It is known by an old result of Szeg\"o [Sz] that one can never get an exponent of $1$, providing a limit
to how far these results may be improved.

For spheres in four or more dimensions, the problem becomes much less difficult. One can show that for
 $n+1 \geq 4$ one has $N(s) - s^{n+1}m(D) = O(s^{n-1})$, and that this exponent is best possible. 
 We refer to [Kr2] for more information about the higher-dimensional situation. 
 
Most past results for domains other than spheres have been for convex domains. In the case of a convex domain in two dimensions whose boundary is
$C^2$ with nonvanishing curvature, van der Corput [Va1] showed that $N(s) - s^2m(D) = O(s^{2 \over 3})$. He later showed in [Va2] that if the boundary is in fact $C^{\infty}$, then
one necessarily has an exponent strictly less than ${2 \over 3}$, improved by Trifonov [Tr] to ${27 \over 41} + \epsilon$. Later, in the above-mentioned
paper of Huxley [Hu3] the exponent ${131 \over 208} + \epsilon$ was achieved for such domains with $C^3$ boundary. One will never get an exponent 
better than ${1 \over 2}$ that works for all $C^2$ convex domains with nonvanishing curvature, as was first proved by Jarnik [J1]. There are individual 
such domains where
one cannot improve Van der Corput's ${2 \over 3}$ exponent, proved by Jarnik in [J2].

For a convex domain in higher dimensions whose boundary has nonzero Gaussian curvature and satisfies an appropriate smoothness condition,
 Hlawka [Hl1][Hl2]  showed that one has
$N(s) - s^{n+1}m(D) = O(s^{n - 1 + {2 \over n + 2}})$. This result was improved over the subsequent decades. A noteworthy example is the paper of
M\"uller
[Mu] where it is shown that for $n = 2$ one can improve the exponent to $1 + {20 \over 43} + \epsilon$, for $n = 3$ one can improve it to 
$2 + {6 \over 17} + \epsilon$, and for $n \geq 4$ one can improve it to $n - 1 + {n + 5 \over n^2 + 3n + 4} + \epsilon$. 

There are also a variety of papers where the Gaussian curvature condition and/or smoothness condition on the boundary of the convex domain 
are relaxed, such as the papers [ISaSe1] [ISaSe2] [ISaSe3] [R3] [R4], whose proofs use harmonic analysis techniques.

We refer to the references  [Hu1] [Hu2] [IvKrKuNo] [Kr1] [Kr2] for further results on lattice point discrepancy problems.

\subsection{Maximal averages}

\noindent Let $S$ be a smooth hypersurface in $\R^{n+1}$, let $\phi(x)$ be a smooth cutoff function on $\R^{n+1}$ and let $\mu$ denote the Euclidean surface measure on $S$. Consider the operator
\[Mf = \sup_{s > 0} \bigg|\int_{\R^{n+1}} f(x  - s t)\,\phi(t)\,d\mu(t)\bigg|\]
Such operators are often referred to as  maximal averaging operators over hypersurfaces and there has been a lot of work on
these operators. The initial work was by Stein 
 [S2], where maximal averages over $n$-dimensional spheres were analyzed for $n \geq 2$ and $M$ was shown to be bounded on $L^p$ exactly 
when $p > {n + 1 \over n}$. The tricky case when $n = 1$ was later dealt with in Bourgain's paper [B], where boundedness of $M$ was shown to indeed
hold if and only if
$p > 2$. These results can be generalized to situations where $S$ is a hypersurface for which the Hessian determinant has positive rank, as was shown
in results of Sogge [So] and Greenleaf [Gr]. 

As for more general hypersurfaces, there have been a number of results. In the paper [I] of Iosevich, it was shown that the methods of [B] 
can be extended to finite-type 
curves in $\R^2$. For two-dimensional hypersurfaces in $\R^3$, Ikromov, Kempe, and Muller [IkKeM] proved a sharp result for $p > 2$ whenever the hypersurfaces satisfy a certain
transversality condition. The author also has a recent paper [G7] with results on this subject. The methods of [IkKeM] were later extended to various
$p < 2$ situations in [BuDeIkM]. The methods of [IkKeM] and [G7] have some commonalities
with those of this paper; they use resolution of singularities (in two dimensions) to prove oscillatory integral estimates related to surface measure
Fourier transform decay estimates, and these estimates are key to the proofs of the maximal operator boundedness theorems.  However, the resolution
of singularities methods of both papers are specific to two dimensions, so different resolution of singularities methods are needed in this paper. 

There also have been quite a few results in higher dimensions. The paper [SoS] by Sogge and Stein showed that if the Gaussian curvature of $S$
does not vanish to infinite order at any point, there is some $p < \infty$ for which $M$ is bounded on $L^p$. Optimal values of $p$ for which
$M$ is bounded on $L^p$ have been proven by the author under a nondegeneracy condition
on the Newton polyhedron [G5]. For convex hypersurfaces of finite line type, there have been many results. For example, there are results of
Cowling and Mauceri  [CoMa1] [CoMa2], Nagel, Seeger, and Wainger [NaSeW], Iosevich and Sawyer [ISa1], and Iosevich, Sawyer and Seeger [ISaSe4]. 

In [ISa1], two conjectures were made for the hypersurface case. The first conjecture is as follows.
 
\noindent {\bf Conjecture 1. [ISa1]} If $S$ is a smooth hypersurface and $p > 2$, then a necessary and sufficient condition for $M$ to be bounded on 
$L^p$ for every $\phi(x)$ is that $dist(x,V)^{-{1 \over p}}$ is locally integrable with respect to the measure $\mu$ for any hyperplane $V$ not containing the origin.

In [ISa2] the authors showed the necessity of this condition in relatively short order (for all $p > 1$ in fact), so the main issue is showing sufficiency for 
$p > 2$. The second conjecture in [ISa1] is as follows, extending a conjecture of Stein who considered the $\gamma = {1 \over 2}$ case. 
 
\noindent {\bf Conjecture 2. [ISa1]} If $S$ is a smooth hypersurface such that $|\hat{\mu}(\xi)|
\leq C(1 + |\xi|)^{-\gamma}$ for some $0 < \gamma \leq  {1 \over 2}$ then $M$ is bounded on $L^p$ for $p > {1 \over \gamma}$  for any $\phi(x)$.

Suppose  $S$ is a compact real analytic hypersurface with  $g \leq {1 \over n + 1}$ none of whose tangent planes contain
the origin. Then Conjecture 1 becomes the statement that a necessary and sufficient condition for $M$ to be bounded on $L^p$ for every $\phi(x)$ is
that $p > {1 \over g}$. So if Conjecture 2 holds, given $p_0 > {1 \over g}$ one can use a partition of unity in conjunction with Theorem 1.3 below and 
take $\gamma = {1 \over p_0}$ in Conjecture 2, leading to the statement that $M$ is bounded on $L^p$ for any $p > p_0$. Letting $p_0$ approach ${1 \over g}$ we then have that $M$ is bounded on $L^p$ for all $p > {1 \over g}$. 

Thus  Theorem 1.3 combined with Conjecture 2 imply that the sufficiency part of Conjecture 1 holds. Since the necessity part was shown in [ISa2], we have that Conjecture 2 implies Conjecture 1 for the surfaces at hand.

\subsection{Fourier transforms of indicator functions}

We let $D$ be a bounded open set in $\R^{n+1}$ with real analytic boundary, like before. As usual let $\chi_D(x)$ denote
the indicator function of $D$. We are interested in estimates of the following form for $|\xi| > 2$ say, where $C$ is a constant that can depend on the domain $D$.
\[|\widehat {\chi_D}(\xi)| \leq C |\xi|^{-\delta} (\ln |\xi|)^l \tag{1.3}\]
Here we would like $\delta > 0$ be optimal and $l \geq 0$ to be optimal for a given $\delta$. Since $D$ is a bounded domain, $|\widehat {\chi_D}(\xi)|$ is
a bounded function, so we are interested in seeing when $(1.3)$ holds for $|\xi| > N$ for some large $N$.

Using a partition of unity and simple integrations by parts transverse to the surface one can show $\delta \geq 1$, and the fact that
$\partial D$ is a non-flat real analytic surface can be used to show $\delta > 1$. The general heuristic is that the more curved the surface the better the 
exponent $\delta$ can be made. The most extreme case is when $\partial D$ has nonzero Gaussian curvature, where 
one can take $(\delta, l) = ({n \over 2} + 1,0)$.

Note that by Plancherel's theorem, since $D$ is bounded one has that  $\widehat {\chi_D}(\xi)$ is in $L^2(\R^{n+1})$. As a result, $|\widehat {\chi_D}(\xi)|$
decays as $o(|\xi|^{-{n+1 \over 2}})$ in a
certain averaged sense. But when the surface is not very curved at at  least one point on the boundary, there will be directions with a much slower decay rate.

This problem has been considered by a number of authors. We mention the early work of Randol [R1] [R2] (for $D$ not necessarily with real analytic boundary), which also proved lattice point discrepancy results using the Fourier transform methods described in section 1.2. [BraHoI] [Sv]  give further results on Fourier transforms of indicator functions for convex sets. When the set is convex one does not get a logarithmic factor, so the issue is finding the optimal power $|\xi|^{-\delta}$ in the Fourier transform decay rate.

As will be described in $(1.12)-(1.14)$, a straightforward integration by parts argument
connects decay rate estimates on $|\widehat {\chi_D}(\xi)|$ to decay rate estimates on the Fourier transform of  smooth surface measures on $\partial D$.
 We refer to the end of section 1.5 for a description of some prior work in the latter subject.
 
\noindent Our theorem regarding Fourier transforms of characteristic functions is as follows, where $C$ denotes a constant depending on the domain $D$.
Here $g$ is as in sections 1.1 and 1.2.

\begin{theorem}
\
\begin{enumerate}

\item Suppose $g < {1 \over n + 1}$. Then $g = g_x$ for at least one $x \in \partial D$. Let $k = \max \{k_x: x \in \partial D$ with $g_x = g\}$.
 Then for $|\xi| > 2$ the estimate $|\widehat {\chi_D}(\xi)| \leq C |\xi|^{-1 - g} (\ln |\xi|)^k$ is satisfied.
\item If $g = {1 \over n + 1}$, then part 1 holds except with an additional logarithmic factor: $|\widehat {\chi_D}(\xi)| \leq C |\xi|^{-1 - g} (\ln |\xi|)^{k+1}$.
\item If $g > {1 \over n + 1}$, then the estimate $|\widehat {\chi_D}(\xi)| \leq C |\xi|^{-1 - {1 \over n+1}}$ holds.
\item If $g < 1$, then one does not have an estimate $|\widehat{\chi_D}(\xi)| \leq C|\xi|^{-1 - \delta}$ for any $\delta > g$. Thus the exponent $-1 - g$ of parts 1 and 2 of this theorem is sharp.
\end{enumerate}
\end{theorem}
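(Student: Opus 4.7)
For parts 1--3 the plan is to use the divergence-theorem reduction (1.12)--(1.14): integrating by parts in the $\xi$ direction produces, for $\xi\neq 0$ and $u=\xi/|\xi|$,
\[
\widehat{\chi_D}(\xi) \;=\; \frac{i}{|\xi|}\int_{\partial D}(u\cdot\nu(x))\,e^{-i\xi\cdot x}\,d\mu(x),
\]
converting an estimate $|\xi|^{-1-\delta}(\ln|\xi|)^{l}$ on $|\widehat{\chi_D}(\xi)|$ into a pointwise $|\xi|^{-\delta}(\ln|\xi|)^{l}$ bound on Fourier transforms of smooth surface densities on $\partial D$. I would then localize via a partition of unity: near each $x_0\in\partial D$, write $\partial D$ locally as a graph $x_{n+1}=f(y)$ with $f(0)=0$ and $\nabla f(0)=0$, so that the local surface integral becomes $\int\phi(y)\,e^{-i(\xi'\cdot y+\xi_{n+1}f(y))}\,dy$. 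Resolution-of-singularities oscillatory bounds of the type developed in [G1] then deliver local decay $|\xi|^{-g_{x_0}}(\ln|\xi|)^{k_{x_0}}$ when $g_{x_0}\leq\frac{1}{n+1}$ (with one extra logarithm at equality) and a universal $|\xi|^{-1/(n+1)}$ when $g_{x_0}>\frac{1}{n+1}$. Compactness of $\partial D$ together with lower semicontinuity of $x\mapsto g_x$ (a consequence of the real-analytic resolution underlying (1.2)) ensures the infimum $g$ is attained at some boundary point, and summing over the finite partition yields the global estimates with $k=\max\{k_x:g_x=g\}$.

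For part 4, fix $x_0$ with $g_{x_0}=g<1$ and local coordinates in which $D=\{x_{n+1}>f(y)\}$ near $x_0$, $f(0)=0$, $\nabla f(0)=0$. Testing at $\xi=\lambda e_{n+1}$ and performing the inner $x_{n+1}$ integration produces, with a smooth bump $\phi\geq 0$ supported near $x_0$,
\[
\int_D \phi(x)\,e^{-i\lambda x_{n+1}}\,dx \;=\; \frac{1}{i\lambda}\int\psi(y)\,e^{-i\lambda f(y)}\,dy\;+\;O(\lambda^{-N})
\]
for all $N$, where $\psi\ge 0$ is an auxiliary cutoff built from $\phi$. If the hypothetical bound $|\widehat{\chi_D}(\xi)|\le C|\xi|^{-1-\delta}$ held for some $\delta>g$, then $I(\lambda):=\int\psi\,e^{-i\lambda f}\,dy=O(\lambda^{-\delta})$. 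But comparing $I(\lambda)$ with its Laplace analogue $\int\psi\,e^{-\lambda|f|}\,dy$, which by (1.2) has positive leading term of order $\lambda^{-g_{x_0}}(\ln\lambda)^{k_{x_0}}$, and invoking the standard Mellin/Varchenko asymptotic expansion for oscillatory integrals with real-analytic phase, one concludes that $|I(\lambda)|$ cannot decay faster than $\lambda^{-g}$, contradicting the hypothesis.

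\emph{Main obstacle.} The heart of parts 1--3 is the surface-measure Fourier estimate, and in particular the tracking of the logarithmic exponent $k_{x_0}$ through a resolution of singularities as well as the capture of the extra logarithm at the borderline $g_{x_0}=\frac{1}{n+1}$; this is where the real work lies, and it presumably draws on the author's earlier resolution-of-singularities machinery. The delicate step in part 4 is ruling out accidental cancellation in the leading coefficient of the oscillatory expansion of $I(\lambda)$ for arbitrary phase $f$, since for a bare oscillatory integral this coefficient depends on $\phi$ and can be made to vanish; the positivity-based Laplace comparison sketched above is the cleanest way to circumvent this and force the lower bound.
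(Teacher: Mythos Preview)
For parts 1--3 your plan coincides with the paper's: the integration-by-parts reduction (1.12)--(1.14) to a surface-measure Fourier transform, a partition of unity on $\partial D$, and then the local estimate of Theorem 1.2, which is indeed the technical heart and is proved via simultaneous resolution of singularities of the functions $g_i(x)=v_i\cdot(x,f(x))$ together with an angular compactness lemma (Lemma 2.4) and Van der Corput with up to $n+1$ derivatives in the resolved coordinates.

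For part 4 your route differs from the paper's. You want the Varchenko asymptotic expansion of $I(\lambda)=\int\psi\,e^{-i\lambda f}$ together with a ``Laplace comparison'' to force a nonzero leading coefficient. That can be made to work---the leading oscillatory and Laplace coefficients are linked through the Mellin transform by an explicit Gamma factor which is nonzero for $0<g_{x_0}<1$---but your sketch does not actually supply this link, and the bare positivity of the Laplace integral does not by itself preclude cancellation in the oscillatory one. The paper avoids asymptotic expansions entirely: assuming $|I(\lambda)|\le C|\lambda|^{-g'}$ for some $g'>g_{x_0}$, it picks $g_{x_0}<g''<g'$, integrates $I(\lambda)\,|\lambda|^{g''-1}$ against a smooth bump $B(\epsilon\lambda)$ with $\widehat B\ge 0$, and Fourier-inverts to obtain $\int b_\epsilon(f(x))\,\psi(x)\,dx\le A'$ uniformly in $\epsilon$, where $b_\epsilon\to c\,|\cdot|^{-g''}$. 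Letting $\epsilon\to 0$ yields $\int|f|^{-g''}\psi<\infty$, hence $m\{|f|<\epsilon\}\le C\epsilon^{g''}$ near $0$, contradicting (1.2). This uses only the positivity of $\psi$ and the distributional Fourier pair $|\lambda|^{g''-1}\leftrightarrow c_{g''}|y|^{-g''}$, and sidesteps the cancellation issue altogether. One small correction: in your displayed reduction the remainder is $O(\lambda^{-2})$, not $O(\lambda^{-N})$, since each integration by parts in $x_{n+1}$ produces a new boundary term; this is harmless because $g<1$.
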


Although we will not prove it here, by rigorizing the heuristics at the end of this section one can actually show that if $x \in \partial D$ with $g_{x} < 1$ then
there even exists a constant $C'$ (depending on $D$) such that for sufficiently large $R$ one has $\sup_{|\xi| = R}|\widehat {\chi_D}(\xi)| \geq C' R^{-1 - g_x} (\ln R)^{k_x}$. Hence the estimate of part 1 of Theorem 1.2 is optimal.

\

\noindent {\bf Example 1.}

For positive integers $a_1,...,a_{n+1}$ let $D = \{x \in \R^{n+1}: \sum_{i=1}^{n+1} x_i^{2a_i} \leq 1\}$. Since each function $x_i^{2a_i}$ is convex, for
any $x$ and $y$ in $D$ and any $0 \leq t \leq 1$ one has $\sum_{i=1}^{n+1} (tx_i + (1- t)y_i)^{2a_i}  \leq \sum_{i=1}^{n+1} (tx_i^{2a_i} + (1 - t)y_i^{2a_i}) = t\sum_{i=1}^{n+1} x_i^{2a_i}  + (1 - t)  \sum_{i=1}^{n+1} y_i^{2a_i} < 1$. Hence $D$ is convex. 

Moreover, whenever $y \in \partial D$ with
more than one component $y_i$ nonzero, the surface $\partial D$ has at least one nonvanishing curvature at $y$. To see why this is the case, suppose $y_i$ and $y_j$ are
both nonzero. Then the two-dimensional cross section of $D$ in the $x_i$ and $x_j$ variables containing $y$ is a curve of the form $\{(x_i,x_j): x_i^{2a_i} + 
x_j^{2a_j} = c\}$ with $(y_i,y_j)$  on the portion of the curve with $x_i,x_j,c \neq 0$. A direct computation reveals that the curve has nonvanishing curvature at
such points. Therefore the curvature tangent to this curve is nonvanishing at $y$ and thus $g_y \geq {1 \over 2}$.

Next suppose $y \in \partial D$ is such that all components except one are zero. We focus on the case where $y_{n+1} \neq 0$, so that
$y = (0,...,0,1)$. Then since $x_{n+1} = (1 - \sum_{i=1}^n x_i^{2a_i})^{1 \over 2a_{n+1}}$ on $\partial D$ near $y$, one can represent 
$\partial D$ near $y$ as the graph of $f(x_1,...,x_n) = 1 - {1 \over 2a_{n+1}}b(x_1,...,x_n) \sum_{i=1}^n x_i^{2a_i}$, where $b(x)$ is real analytic with 
$b(0,...,0) = 1$. Thus the pair $(g_y, k_y)$ is characterized by the following holding for small $r > 0$.
\[ m(\{x: |x| < r, \sum_{i=1}^n x_i^{2a_i}  < \epsilon\}) = c_{r,y} \epsilon^{g_y} |\ln \epsilon|^{k_y} + o (\epsilon^{g_y} |\ln \epsilon|^{k_y})
\tag{1.4}\]
One can compute the exponent in $(1.4)$ directly using elementary methods, or using Newton polyhedra as in [AGuV]. The result is that $k_y = 0$ and
$g_y = \sum_{i=1}^n {1 \over 2a_i}$. 

The analogous result holds for the other $y$ for which all but one component is zero. So if $y_j$ is nonzero we have $g_y = \sum_{i \neq j} {1 \over 2a_i}$.
Thus the minimum of $g_y$ over all $y$ for which all but one component is zero is given by $g_0 = \sum_{i = 1}^{n+1} {1 \over 2a_i} -
{1 \over 2 \min_i a_i}$. So the index $g$ is given by $g_0$ if $g_0 \leq {1 \over 2}$, and some number at least ${1 \over 2}$ if $g_0 > {1 \over 2}$. 
If $g < {1 \over n + 1}$
as in part 1 of Theorem 1.2, we must be in the former situation and as a result $g = g_0 = \sum_{i = 1}^{n+1} {1 \over 2a_i} -
{1 \over 2 \min_i a_i}$.

\noindent {\bf Example 2.} 

Let $p(x_1,...,x_n)$ be a polynomial with $p(0) = 0$ and let $a$ be a positive integer such that $2a$ is greater than the degree of $p$. For a
small $\eta > 0$, let $D = \{x \in \R^{n+1}: \sum_{i=1}^{n+1} x_i^{2a} -  \eta\, p(x_1,...,x_n) = 1\}$. If $\eta$ is sufficiently small, then $\partial D$ is
domain with real analytic boundary, as can be seen  by observing that the outward directional derivative will be nonzero on $\partial D$
if $\eta$ is sufficiently small.

Since $p(0) = 0$, the point $y = (0,...,0,1)$ is on $\partial D$. Near this point, $\partial D$ satisfies 
\[ x_{n+1} = (1 +  \eta\, p(x_1,...,x_n) - \sum_{i=1}^{n} x_i^{2a})^{1 \over 2a} \tag{1.5}\]
As a result, near $(0,...,0,1)$ the surface $\partial D$ is represented as the graph of $f(x)$, where $f(x)$ is of the form
\[ f(x_1,...,x_n) = 1 + b(x_1,...,x_n) ({\eta \over 2a}\, p(x_1,...,x_n) - {1 \over 2a} \sum_{i=1}^{n} x_i^{2a}) \tag{1.6}\]
Here $b(x)$ is real analytic with $b(0) = 1$. Stated another way, for any polynomial $p(x)$ with $p(0) = 0$ one can find an arbitrarily high order perturbation
 $q(x)$ of a small multiple of $p(x)$ for which there exists a domain $D$ with $y = (0,...,0,1) \in \partial D$ near which $\partial D$ is the graph of
  $1 + b(x)q(x)$ near $y$, where $b(0) = 1$. This illustrates
that there is a wide range of possible local behavior for the surfaces and indices being considered in this paper.

In the case at hand, the pair $(g_y, k_y)$ is characterized by the following being true for small $r > 0$.
\[ m(\{x: |x| < r,\,\big|p(x_1,...,x_n) - {1 \over  \eta} \sum_{i=1}^{n} x_i^{2a}\big| < \epsilon\}) = c_{r,y} \epsilon^{g_y} |\ln \epsilon|^{k_y} + o (\epsilon^{g_y} |\ln \epsilon|^{k_y})
\tag{1.7}\]

\noindent {\bf Some motivation behind Theorem 1.2.} 

To give an idea of why $(1 + g,k)$ might be the optimal exponent pair for which $(1.3)$ holds, let $v$ be any direction, and suppose we are looking 
at $\widehat{\chi_D}(\xi)$ for $\xi$ in the $v$ direction. Let $x_0$ be any point on $\partial D$ for which $v$ is the normal direction to $\partial D$ at $x_0$. 
After a translation and rotation, we can assume that $x_0$ is the origin and that $v$ is the $(0,...,0,1)$ direction. Let $f(x)$ be such that the surface 
$\partial D$ is parameterized by $x_{n+1} = f(x_1,...,x_n)$ with $f(0) = 0$ and $\nabla f(0) = (0,...,0)$. For some small $r > 0$, let $A_r$ denote
the disk $\{x \in \R^{n}: |(x_1,...,x_n)| < r\}$ and let $\beta(x)$ be a nonnegative cutoff function on $\R$ supported on $(-r,r)$ and nonzero on a 
neighborhood of
$x = 0$. We examine the Fourier transform of $\chi_D(x) \chi_{A_r}(x_1,...,x_n)\beta(x_{n+1})$ in the $(0,...,0,1)$ direction, given by $I(t)$, where
\[ I(t) = \int_{-\infty}^\infty\bigg(\int_{|(x_1,...,x_n)| < r}\chi_D(x)\,dx_1...\,dx_n\bigg)\beta(x_{n+1})e^{-i tx_{n+1}}\,dx_{n+1} \tag{1.8}\]
Write $f(x) = f^+(x) - f^-(x)$ as the difference of its positive and negative parts, and let $c = m(\{x: |x| < r, f(x) < 0\})$.

If $x_{n+1} > 0$, the inner integral of $(1.8)$ is given by 
$c + m(\{x: |x| < r,\,0 < f^+(x) < x_{n+1}\})$, while if $x_{n+1} < 0$, the inner integral of $(1.8)$ is given by 
$c  - m(\{x: |x| < r,\,0 < f^-(x) < -x_{n+1}\})$. Thus $I(t)$ can be rewritten as
\[ c \int_{-\infty}^\infty  \beta(x_{n+1})e^{-i tx_{n+1}}\,dx_{n+1} \]
\[+ \int_0^{\infty}m(\{x: |x| < r,\,0 < f^+(x) < x_{n+1}\})\beta(x_{n+1})e^{-i tx_{n+1}}\,dx_{n+1} \]
\[ -\int_0^{\infty}m(\{x: |x| < r,\,0 <  f^-(x) < x_{n+1}\})\beta(x_{n+1})e^{i tx_{n+1}}\,dx_{n+1} \tag{1.9}\]
Because $\beta$ is smooth, the first integral in $(1.9)$ decays faster than $C|t|^{-N}$ for any $N$. For the second and third integrals, assuming 
$f^+$ and $f^-$ respectively are not identically zero near the origin, if $r$ is 
sufficiently small, analogously to $(1.1)$ we may write
\[  m(\{x: |x| < r,\, 0 < f^+(x) < \epsilon\}) = c_r^+ \epsilon^{g^+} |\ln \epsilon|^{k^+} + o (\epsilon^{g^+} |\ln \epsilon|^{k^+})
\tag{1.10a}\]
\[  m(\{x: |x| < r,\, 0 < f^-(x) < \epsilon\}) = c_r^- \epsilon^{g^-} |\ln \epsilon|^{k^-} + o (\epsilon^{g^-} |\ln \epsilon|^{k^-})
\tag{1.10b}\]
Then the growth rate $\epsilon^{g_0} |\ln \epsilon|^{k_0}$ of $(1.1)$ (letting $x_0 = 0$) is the slower of the two corresponding growth rates of $(1.10a)-(1.10b)$.
By resolution of singularities, it turns out that derivatives in $\epsilon$ of the left-hand expressions of $(1.9)$ have asymptotics given by the derivatives in
$\epsilon$ of the corresponding right-hand sides, with the same for higher derivatives. Thus the second and third integrals of $(1.9)$ have the same asymptotics in $t$ as the integrals 
\[c_r^+\int_0^{\infty}   x_{n+1}^{g^+} |\ln x_{n+1}|^{k^+} \beta(x_{n+1})e^{-i tx_{n+1}}\,dx_{n+1}\]
\[ c_r^- \int_0^{\infty}  x_{n+1}^{g^-} |\ln x_{n+1}|^{k^-} \beta(x_{n+1})e^{i tx_{n+1}}\,dx_{n+1}\tag{1.11}\]
By stationary phase one gets that for large $|t|$ the first integral is of order $|t|^{-1 - g^+}|\ln t|^{k^+}$ and the second integral is
of order $|t|^{-1 - g^-}|\ln t|^{k^-}$. When $g_0 < 1$, it can be shown that the coefficients of these terms are such that even if $g^+ = g^-$
and $k^+ = k^-$, these two leading terms will not cancel. As a result, the overall expression $(1.9)$ will have a decay rate in $t$ given
by the slower of these two decay rates, which is exactly the rate $(g_0, k_0)$. 

To find the decay rate of $\widehat{\chi_D}(\xi)$ in some arbitrary $v$ direction, one does a partition of unity in the integral defining $\widehat{\chi_D}(\xi)$.
The largest terms will be those terms that are localized near a boundary point $x_0$ of $D$ for which the normal to $\partial D$ is in the $v$ direction.
Using a rigorization of the above argument, such a term has magnitude comparable to $C|\xi|^{-1 - g_{x_0}}(\ln |\xi|)^{-k_{x_0}}$ for large $|\xi|$.
 Adding over all such terms of the partition of unity 
gives a bound of the maximum of several such terms. Since the index $g$ is the infimum of all $g_{x_0}$, we have that in the $v$ direction
 $\widehat{\chi_D}(\xi)$  satisfies the bounds of Theorem 1.2 part 1. This bound will be sharp since different terms coming from the partition of unity 
will not cancel one another out. This can be seen as follows. Note that a translation by some $a \in \R^{n+1}$ results in a factor of $e^{-i \xi \cdot a}$ on
the Fourier 
transform. As a result, the asymptotics corresponding to a term of the partition of unity centered at $a$ will have a coefficient of $e^{-i \xi \cdot a}$ 
on the asymptotics and several such terms can never cancel, even if they have the same rate of Fourier decay. As a result, the overall decay rate
of the Fourier transform will be exactly the minimum of the decay rates corresponding to the various terms from the partition of unity.

Note that the above heuristics only use consequences of resolution of singularities and not the Van der Corput lemma or other tools from harmonic 
analysis often used to analyze such integrals. The reason is that the above heuristics only apply in a single direction, and finding bounds that are
uniform over all directions is substantially more difficult. Thus to prove Theorem 1.2 more is needed. For this we turn to the connection between 
the above Fourier transforms of indicator functions and Fourier transforms of surface measures.

\subsection{Fourier transforms of hypersurface measures}

Let $x_0 \in \partial D$ and like above, without loss of generality we may assume that $x_0 = 0$ and $\partial D$ is the graph of some $f(x_1,...,x_n)$ 
with $f(0) = 0$ and $\nabla f(0) = 0$. Let $\psi(x)$ be a nonnegative cutoff function supported near the origin with $\psi(x) = 1$ on some neighborhood of
the origin. We look at $\widehat{\psi(x)\chi_D(x)}(\xi)$, given by
\[\int_{\R^n}\bigg(\int_{\{x: x_{n+1} > f(x_1,...,x_n)\}} \psi(x_1,...,x_{n+1})e^{-i\xi_{n+1} x_{n+1}}\,dx_{n+1}\bigg) e^{-i\xi_1x_1 -... -i\xi_nx_n}\,dx_1...\,dx_n \tag{1.12}\]
We change variables in the inner integral, turning $x_{n+1}$ into $x_{n+1} + f(x_1,...,x_n)$. Then for a new cutoff function $\psi_0(x_1,...,x_{n+1})$, 
$(1.12)$ becomes
\[\int_{\R^n}\bigg(\int_0^{\infty} \psi_0(x_1,...,x_{n+1})e^{-i\xi_{n+1}x_{n+1}}\,dx_{n+1}\bigg) e^{-i\xi_{n+1}f(x_1,...,x_n) - i\xi_1x_1 -... -i\xi_nx_n}\,dx_1...\,dx_n \tag{1.13}\]
Suppose $|\xi_{n+1}| < \sum_{i=1}^n |\xi_i|$. Then since $\nabla f(0) = 0$, if $\psi_0$ is supported on an appropriately small neighborhood of the origin,
one can integrate by parts in $(1.13)$ repeatedly in the $x_i$ variable for which $|\xi_i|$ is greatest and obtain a bound of $C_N|\xi_i|^{-N} \leq
C_N'|\xi|^{-N}$ for any $N$. Thus our interest is in the case when $|\xi_{n+1}| \geq \sum_{i=1}^n |\xi_i|$, which we henceforth assume.

We integrate by parts twice in the $x_{n+1}$ variable in $(1.13)$, integrating the $e^{-i\xi_{n+1}x_{n+1}}$ factor and differentiating 
$\psi_0(x_1,...,x_{n+1})$, leading to
\[{1 \over -i\xi_{n+1}}\int_{\R^n}\psi_0(x_1,...,x_n,0)e^{-i\xi_{n+1}f(x_1,...,x_n) - i\xi_1x_1 -... -i\xi_nx_n}\,dx_1...\,dx_n \]
\[+ {1 \over (\xi_{n+1})^2} \int_{\R^n}{\partial \psi_0 \over \partial x_{n+1}}(x_1,...,x_n,0)e^{-i\xi_{n+1}f(x_1,...,x_n) - i\xi_1x_1 -... -i\xi_nx_n}\,dx_1...\,dx_n \]
\[- {1 \over (\xi_{n+1})^2}\bigg(\int_0^{\infty} {\partial^2 \psi_0 \over \partial x_{n+1}^2}(x_1,...,x_{n+1})e^{-i\xi_{n+1}x_{n+1}}\,dx_{n+1}\bigg) e^{-i\xi_{n+1}f(x_1,...,x_n) - i\xi_1x_1 -... -i\xi_nx_n}\,dx_1...\,dx_n \tag{1.14}\]

Note that the second and third terms in $(1.14)$ are bounded by $C{1 \over |\xi_{n+1}|^2} \leq C'{1 \over |\xi|^2}$, and that the first term is
${1 \over -i\xi_{n+1}}$ times the Fourier transform of a surface measure supported on $\partial D$.
So since $|\xi_{n+1}| > c|\xi|$ in the situation at hand, if we can show that such surface measure Fourier transforms satisfy the estimates 
of Theorem 1.2 parts 1-3 except with exponents decreased by 1, then by using a partition of unity on $\partial D$ parts 1-3 of Theorem 1.2 will follow. That these estimates are satisfied follows
from the following theorem.

\begin{theorem}

Let $\mu$ denote the standard Euclidean surface measure on $\partial D$. Suppose $x_0 \in \partial D$, and let $g_{x_0}$ and $k_{x_0}$ be as in $(1.1)$. Then there is a neighborhood $N$ of $x_0$
in $\R^{n+1}$ such that if $\phi(x)$ is a smooth function supported on $N$, then for some constant $C > 0$ depending 
on both $D$ and $\phi$, for all $|\xi| > 2$ we have

\begin{enumerate}

\item If $g_{x_0} < {1 \over n + 1}$, then $\widehat{\phi(x) \mu}(\xi)\leq C|\xi|^{-g_{x_0}} (\ln |\xi|)^{k_{x_0}}$.
\item If $g_{x_0} = {1 \over n + 1}$, then $\widehat{\phi(x) \mu}(\xi)\leq C|\xi|^{-g_{x_0}} (\ln |\xi|)^{k_{x_0}+1}$.
\item If $g_{x_0} > {1 \over n + 1}$, then $\widehat{\phi(x) \mu}(\xi) \leq C |\xi|^{-{1 \over n + 1}}$.
\item If $g_{x_0} < 1$ and  $\phi(x)$ is a nonnegative function that is positive on a neighborhood of $x_0$, then one does not
have an estimate $\widehat{\phi(x) \mu}(\xi)\leq C|\xi|^{-h}$ for any $h > g_{x_0}$. Thus if $g_{x_0} \leq {1 \over n+ 1}$ the power of $|\xi|$ in
parts 1 and 2 is best possible. 

\end{enumerate}
\end{theorem}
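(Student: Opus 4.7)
The plan is to reduce $\widehat{\phi \mu}(\xi)$ to an oscillatory integral over $\R^n$ and exploit resolution of singularities to control it uniformly in $\xi$. After translating $x_0$ to the origin and rotating so that $\partial D$ is locally the graph of $f(x_1,\ldots,x_n)$ with $f(0)=0=\nabla f(0)$, the surface integral becomes
\[
\widehat{\phi\mu}(\xi) \;=\; \int_{\R^n} \Psi(x)\, e^{-i\xi_{n+1} f(x) - i\xi'\cdot x}\,dx,
\]
for $\xi=(\xi',\xi_{n+1})$ and a smooth, compactly supported $\Psi$ absorbing the surface Jacobian and cutoff. When $|\xi'|\geq C|\xi_{n+1}|$ and the support of $\Psi$ is small enough that $|\nabla f|\ll 1$, repeated integration by parts in the $x_i$ direction with the largest $|\xi_i|$ yields rapid decay $O(|\xi|^{-N})$, well within the claimed bounds. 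So I restrict attention to $|\xi_{n+1}|\sim|\xi|$, and set $\lambda=\xi_{n+1}$ and $\eta=\xi'/\xi_{n+1}$ with $|\eta|$ bounded.

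The heart of the argument is to estimate $I(\lambda,\eta)=\int \Psi(x)\,e^{-i\lambda\Phi_\eta(x)}\,dx$ with $\Phi_\eta(x)=f(x)+\eta\cdot x$. For $\eta=0$ I rewrite $I(\lambda,0)$ as $\int e^{-i\lambda s}\,d\nu_\Psi(s)$, where $\nu_\Psi$ is the $\Psi$-weighted pushforward of $f$; by (1.2) together with the smoothness of the higher-order remainder produced by resolution of singularities, the density of $\nu_\Psi$ has leading behavior $c\,s^{g_{x_0}-1}|\ln s|^{k_{x_0}}$ as $s\to 0^+$ plus smoother error, and integration by parts in $s$ yields $|I(\lambda,0)|\leq C|\lambda|^{-g_{x_0}}(\ln|\lambda|)^{k_{x_0}}$, matching part~1 in the $(0,\ldots,0,\lambda)$ direction. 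For general $\eta$, the plan is: (a) resolve the singularities of $f$ near $0$, obtaining finitely many charts on which $f\circ\pi$ is a monomial times a unit; (b) on each chart apply the measure Van der Corput lemma (Lemma 2.3) in a carefully chosen direction to the perturbed phase $\Phi_\eta\circ\pi$; (c) control the $\eta$-perturbed sublevel set measures in terms of those of $f$ so that the one-dimensional oscillation estimates are uniform in $\eta$. Summing dyadic contributions in $|\Phi_\eta|$ produces the bounds of parts~1 and 2, with the extra logarithm at $g_{x_0}=1/(n+1)$ emerging from the borderline summation.

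For part 3 ($g_{x_0}>1/(n+1)$) the preceding argument would overshoot what a single oscillatory factor in $n+1$ parameters can produce; instead I invoke the standard multidimensional Van der Corput estimate applied to $\Phi_\eta$, which supplies the uniform ceiling $|\xi|^{-1/(n+1)}$, the exponent $1/(n+1)$ being optimal because $\Phi_\eta$ depends on $n$ variables and $\lambda$ supplies one extra parameter. For the sharpness assertion in part~4, I specialize to $\xi=(0,\ldots,0,\lambda)$ and read off the leading term of $I(\lambda,0)$ from the expansion above; since $\Psi\geq 0$ is positive near $0$ the coefficient of $\lambda^{-g_{x_0}}(\ln\lambda)^{k_{x_0}}$ does not vanish, ruling out any bound with exponent larger than $g_{x_0}$.

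The main obstacle is obtaining estimates that are uniform over all $\eta$, equivalently over all directions of $\xi$. In any fixed direction the one-dimensional oscillatory integral theory suffices, but the critical points of $\Phi_\eta$ migrate as $\eta$ varies and may approach the singular locus of $f$ when $|\eta|$ is small, so a direct stationary-phase analysis degenerates there. Overcoming this requires the stability of the integrability exponent of $|f|^{-h}$ under linear perturbations, which is exactly the stability result advertised in the abstract; it is what pins down the $\eta$-uniform asymptotics of the sublevel set measures of $\Phi_\eta$ in terms of those of $f$ alone.
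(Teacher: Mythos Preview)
Your reduction to the oscillatory integral and the treatment of the non-normal directions by integration by parts match the paper. The substantive gap is in how you obtain \emph{uniformity in $\eta$}. You propose to resolve the singularities of $f$ alone and then handle the linear perturbation $\eta\cdot x$ afterwards via Lemma~2.3 ``in a carefully chosen direction.'' After blow-up, however, $\eta\cdot\pi(y)$ is no longer linear and need not be comparable to a monomial, so there is no evident direction in which a one-variable Van der Corput estimate controls the full phase $f\circ\pi+\eta\cdot\pi$ uniformly in $\eta$. The paper's fix is different and is the real content of the proof: one first uses a compactness lemma (Lemma~2.4) to reduce to cones $A_{v_1,\ldots,v_{n+1},\epsilon}$ in $\xi$-space, writes the phase as $|\xi|\sum_{i=1}^{n+1} c_i\,g_i(x)$ with $g_i(x)=v_i\cdot(x,f(x))$ and $|c_{i+1}|\le\epsilon|c_i|$, and then resolves all $n+1$ functions $g_i$ \emph{simultaneously}, so that each becomes a unit times a monomial with lexicographically ordered exponents. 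A Vandermonde argument then shows that on each dyadic piece some $y_n$-derivative of order $1\le p\le n+1$ of the full phase is bounded below, and one applies the \emph{oscillatory} Van der Corput lemmas (Lemmas~2.1--2.2), not the measure version you cite. The threshold $1/(n+1)$ arises precisely because there are $n+1$ monomialized phase components and hence at most $n+1$ derivatives are needed; your explanation (``$n$ variables plus one parameter'') is not the mechanism, and there is no off-the-shelf ``multidimensional Van der Corput estimate'' yielding part~3 for general real-analytic phases---part~3 falls out of the same calculation as parts~1--2 once $p\le n+1$ is established.

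Two further points. Invoking the stability theorem from the abstract to close the $\eta$-uniformity gap is circular: Theorem~1.4 is proved in the paper by the \emph{same} simultaneous-resolution and Vandermonde machinery, with Lemma~2.3 replacing Lemmas~2.1--2.2, so you cannot use it as a black box here. For part~4, your plan to read off a nonvanishing leading coefficient of $I(\lambda,0)$ is plausible but is exactly the statement the paper explicitly declines to prove; the paper instead argues by contradiction: assuming $|I(\lambda,0)|\le C|\lambda|^{-g'}$ with $g'>g_{x_0}$, one integrates $I(\lambda)\,|\lambda|^{g''-1}$ against a suitable bump to deduce $\int|f|^{-g''}\,\Psi<\infty$ for some $g''>g_{x_0}$, contradicting the definition of $g_{x_0}$ via $(1.2)$.
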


Although we will not show it here, if $g_{x_0} < 1$ and $\phi(x)$ is a nonnegative function that is positive on a neighborhood of $x_0$, then similarly to Theorem 1.2
there is even a constant $C'$ such that for sufficiently large $R$ one has $\sup_{|\xi| = R}|\widehat{\phi(x) \mu}(\xi)| \geq C' R^{- g_{x_0}} (\ln R)^{k_{x_0}}$.

Suppose we have translated and rotated coordinates so that $x = 0$, $f(x) = 0$, and $\nabla f(0) = 0$. We focus our attention on the following expression for $\widehat{\phi(x) \mu}(\xi)$.
\[\widehat{\phi(x) \mu}(\xi) = \int_{\R^n}\phi(x_1,...,x_n) e^{-i\xi_{n+1}f(x_1,...,x_n) - i\xi_1x_1 -... -i\xi_nx_n}\,dx_1...\,dx_n
\eqno (1.15) \]

Using a rigorization of the heuristics at the end of section 1.4 (see chapter 6 of [AGV] for more information), when $\xi_i = 0$ for all $1 \leq i \leq n$, the integral in $(1.15)$ can be shown
to be bounded by $C|\xi|^{-g_{0}} (\ln |\xi|)^{k_{0}}$ for $|\xi| > 2$. Thus proving Theorem 1.3 is equivalent to showing this bound is stable under 
linear perturbations of the phase. By shrinking the neighborhood of $x_0$ we are working in if necessary, we can always assume that given $\eta > 0$ we 
have $\sum_{i=1}^n|\xi_i| < \eta|\xi|$, for when $\sum_{i=1}^n|\xi_i| \geq \eta|\xi|$ there is a neighborhood of $0$ on which repeated integrations by
parts give a bound of $C_N|\xi|^{-N}$ for any $N$. 

Thus the goal is to understand when the bound when $\xi_i = 0$ for all  $1 \leq i \leq n$ is stable under 
small linear perturbations of the phase. In the $n=1$ case, simply using the Van der Corput lemma for $k$th derivatives, where $k$ is the order of the
zero of $f(x)$ at 
$x = 0$, will do the trick. For arbitrary $n$, in this paper we will use resolution of singularities to simultaneously ``untangle'' the zeroes of $n+1$ functions in the phase to reduce to 
a situation where the functions are effectively monomials and the Van der Corput lemma for some $k$th derivative, with $k \leq n + 1$, can be used in one direction. The author used a similar philosophy in [G6]. Because we may need as many as
$n+1$ derivatives in the Van der Corput lemma, we only get the best possible estimate when $g_{x_0} < {{1 \over n + 1}}$, with an additional logarithmic 
factor if $g_{x_0} = {{1 \over n + 1}}$. This also accounts for the exponent ${1 \over n + 1}$ in part 3 of Theorems 1.2 and 1.3. The author does not know
if this boundary value $g_{x_0} = {{1 \over n + 1}}$ can be improved with an additional argument, such as an examination of the resolution of singularities
process being used rather than simply using a resolution of singularities theorem as we are doing in this paper.

There is an extensive literature concerning the decay rate of Fourier transforms of smooth hypersurface measures, which in view of integrations by parts such
as in $(1.12)-(1.14)$ imply corresponding results on the decay rate of Fourier transforms of indicator functions. If the surface has nonvanishing 
Gaussian curvature of $x_0$, then as is well known one has an estimate $\widehat{\phi(x) \mu}(\xi) \leq C|\xi|^{-{n \over 2}}$.
We refer to Chapter 8 of [S] for details and further results.  When the surface has at least one vanishing curvature, one can use 
similar methods to show one has  $\widehat{\phi(x) \mu}(\xi) \leq C|\xi|^{-{1 \over 2}}$.
One popular class of surfaces to study has been the convex hypersurfaces considered in [BrNaW] [CoDiMaM] [CoMa2] [NaSeW] [Sc]. It should be pointed out
that the paper [BrNaW] characterizes the Fourier decay rate for smooth convex hypersurfaces of finite line type in terms of the height of 
certain balls, and this characterization is readily seen to be equivalent (for such surfaces) to the characterization in terms of $g$ in Theorem 1.3.

\subsection{Stability of integrals}

Suppose $q(x)$ is a real analytic function defined on a neighborhood of the origin in $\R^n$ with $q(0) = 0$. Then as in $(1.1)$, there is an $r_0 > 0$
and a pair $(h, l)$ with $h > 0$ and $0 \leq l \leq n-1$ an integer such that if $0 < r \leq r_0$ we have an asymptotic development
\[ m(\{x: |x| < r,\,|q(x)| < \epsilon\}) = c_r \epsilon^h |\ln \epsilon|^l + o (\epsilon^h |\ln \epsilon|^l)
\tag{1.16}\]
A natural question is ask is how perturbing the function $q(x)$ affects the growth rate of $q(x)$. For example, we may ask for which $s(x)$ 
do we have an estimate of the following form  for some $r_1 < r_0$.
\[ m(\{x: |x| < r_1,\,|q(x) + s(x)| < \epsilon\})  \leq C \epsilon^h |\ln \epsilon|^l \tag{1.17}\]
Ideally, we would like the estimate $(1.17)$ to hold over all $s(x)$ in a large class of real analytic  perturbation functions that are small in an appropriate
sense. 

Besides its inherent interest, this question arises in complex geometry when one considers the analogous question for perturbations of analytic  
functions of several complex variables.
For example, a strong perturbation result was proved in [DK] and then used to prove the existence of K\"ahler-Einstein metrics on a class of Fano orbifolds, generalizing and simplifying earlier work of Nadel [N]. Significant work in this area has also been done by Phong and Sturm, such as in [PSt] and 
subsequent papers. The papers [DK] and [PSt] show that for (complex analytic) perturbations $s(x)$, there are $r_2 > r_1 > 0$ such that equation $(1.17)$ holds if $\sup_{|x| \leq r_2} |s(x)|$ is sufficiently small.

One gets much different behavior in the
real case, as can be seen by the example $q(x_1,x_2,x_3) = x_1^2 + x_2^2 + x_3^2$. Then $(h,l) = (3/2,0)$, but any small linear perturbation of 
$q(x_1,x_2,x_3)$ will have its index pair $(h,l)$ given by $(1,0)$. This phenomenon is not restricted to linear perturbations. If one takes 
$q(x_1,x_2,x_3) = x_1^4+ x_2^2+ x_3^2$, then it can be shown that $(h,l) = (5/4,0)$, while for $x_1^4 -  tx_1^2 + x_2^2+ x_3^2$,
if $t > 0$ one has $(h,l) = (1,0)$. This example is worked out in [G2] and is derived from a related example of
Varchenko [V].

Examples such as the above rule out general results in the real case that are analogous to the ones above in the complex case.  But there are classes of real-analytic functions for which the growth rate can only improve under perturbations. The simplest example occurs when $n = 1$. Suppose
$q(x)$ is a real analytic function of one variable defined near the origin with $q(0) = 0$. Let $m$ denote the order of the zero of $q(x)$ at $x = 0$. Then
$h = {1 \over m}$ and $l = 0$ in $(1.16)$. Let $I$ be an interval centered at $x=0$ and a $\delta > 0$ such that $|{d^m q \over dx^m}| > \delta$ on $I$. Then if $q(x) + s(x)$ is a perturbation
of $q(x)$ such that $|{d^m (q + s) \over dx^m}| > {\delta \over 2} $ on $I$, one can use the measure Van der Corput lemma, Lemma 2.3, to conclude that
there is a constant $C$ independent of $r$ such that $m(\{x \in I: |q(x) + s(x)| < \epsilon\}) < C\epsilon^{1 \over m}$.  Thus the estimate $(1.17)$
holds over all such perturbations with a uniform constant.

In two dimensions it was proved by Karpushkin [Ka1][Ka2] using versal deformation theory that one obtains a strong stability result similar to those for 
complex analytic functions described above. This was reproven up to endpoints in [PSSt] and they also showed that in three dimensions such a result holds
if $h < {2 \over N}$, where $N$ is the order of the zero of $q(x)$ at the origin. But results in higher dimensions have been hard to come by. We will 
show that if one restricts to perturbations coming from a finite-dimensional space of functions, one can get results that hold in any dimension. The 
maximum dimension of such a space of perturbation functions will be sharp in an appropriate sense. Our theorem is as follows.

\begin{theorem}

\
\begin{enumerate}
\item Write $h = {1 \over m + \gamma}$, where $m$ is a nonnegative integer and $0 <  \gamma \leq   1$. Let $s_1(x),...,s_m(x)$ be linearly independent functions,
each real analytic on a neighborhood of the origin and satisfying $s_i(0) = 0$.

Then there are positive constants $C, \delta,$ and $r_1 < r_0$ depending on $q(x)$ and the $s_i(x)$ such that for each $s(x)$ of the form $\sum_{i=1}^m \eta_i s_i(x)$ 
with each $|\eta_i| < \delta$ one has
\[ m(\{x: |x| < r_1,\,|q(x) + s(x)| < \epsilon\})  \leq C \epsilon^h |\ln \epsilon|^l \tag{1.18}\]
Consequently, if $t > 0$ is such that $\int_{\{x: |x| < r_0\}} |q(x)|^{-t} < \infty$, then there is a constant $D_t$ such that for each such $\sum_{i=1}^m \eta_i s_i(x)$ we have  $\int_{\{x: |x| < r_1\}} |q(x) + s(x)|^{-t} < D_t$
\item The number $m$ of functions in part 1 is sharp in the sense that if $t$ is any positive rational number and one writes $t = {1 \over m + \gamma}$ with 
$0 \leq \gamma < 1$, $m$ a nonnegative integer, then there exists a $q(x)$ for which $h = t$, real analytic functions $s_1(x),...,s_{m+1}(x)$ with each $s_i(0) = 0$, 
and an $h' < h$ such that there are no $\delta, r_1$ and $C$ for which every $s(x)$ of the form $\sum_{i=1}^{m+1}\eta_i s_i(x)$ with each 
$|\eta_i| < \delta$ satisfies
\[ m(\{x: |x| < r_1,\,|q(x) + s(x)| < \epsilon\})  \leq C \epsilon^{h'} \tag{1.19}\]
Here $q(x)$ and the $s_i(x)$ are functions of a number of variables that depends on $t$.

\end{enumerate}
\end{theorem}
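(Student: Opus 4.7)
My plan is to prove part 1 by combining resolution of singularities with the measure Van der Corput lemma (Lemma 2.3), with the allowed number $m$ of perturbation functions matched to the derivative order used, and to prove part 2 by constructing an explicit counterexample generalizing the Varchenko-type example referenced in the introduction.

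For part 1, I would first apply resolution of singularities to $q(x)$ (as in chapter 7 of [AGuV] or the elementary approach in [G1]), reducing the analysis to a finite collection of coordinate charts in which $q \circ \pi(y) = u(y) y_1^{\alpha_1} \cdots y_n^{\alpha_n}$ with $u$ a nonvanishing real analytic unit, and in which the Jacobian of $\pi$ has the monomial form $v(y) y_1^{\beta_1} \cdots y_n^{\beta_n}$ with $v$ a unit. The exponent $h = 1/(m+\gamma)$ is the minimum over all such charts and directions $i$ with $\alpha_i > 0$ of the ratio $(\beta_i + 1)/\alpha_i$. Fix a chart and a distinguished direction, say $y_1$, attaining this minimum. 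On each slice obtained by fixing $y' = (y_2, \ldots, y_n)$, after dividing out a suitable power of $y_1$, the $(m+1)$-st $y_1$-derivative of $q \circ \pi$ is bounded below by an explicit quantity in $y'$. Applying the measure Van der Corput lemma in the $y_1$-direction then yields a bound of order $\epsilon^{1/(m+1)}$ on each slice, and integrating in $y'$ with the Jacobian weight (and summing over charts) produces the full bound $C\epsilon^h |\ln \epsilon|^l$, with the logarithmic factor arising from regions where several directions simultaneously attain the minimum ratio. To incorporate the perturbation $s = \sum_{i=1}^m \eta_i s_i$, I would observe that each pullback $s_i \circ \pi$ is real analytic on the chart and has $(m+1)$-st $y_1$-derivative uniformly bounded on the compact chart closure; thus for $\delta$ sufficiently small in terms of the lower bound on $\partial_{y_1}^{m+1}(q\circ\pi)$, the analogous derivative of $(q+s)\circ\pi$ differs from that of $q\circ\pi$ by at most half the lower bound, so the Van der Corput estimate persists uniformly in $\eta$. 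The integrability consequence then follows from (1.18) via the layer cake formula.

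For part 2, I would construct an explicit counterexample generalizing the Varchenko-type example $q(x_1,x_2,x_3) = x_1^{2k} + x_2^2 + x_3^2$ (where $h = (2k+1)/(2k)$ drops to $1$ under $-\eta x_1^2$). For a given positive rational $t = 1/(m+\gamma)$ with $0 \leq \gamma < 1$, I would choose $q$ in sufficiently many variables so that its Newton polyhedron has a vertex or face realizing $h = t$, and then choose $m+1$ monomial perturbations $s_i$ tuned so that a particular small linear combination produces an algebraic cancellation that shifts a critical face of the Newton polyhedron to one of strictly smaller height $h' < t$. The fact that such perturbations for arbitrary small $\delta > 0$ produce a set $\{|q+s| < \epsilon\}$ with measure exceeding any prescribed $C\epsilon^{h'}$ for suitably small $\epsilon$ follows from the positive-dimensional critical locus generated by the degeneration.

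The main obstacle in part 1 is the bookkeeping in the resolution chart, especially in regions near coordinate hyperplanes where the lower bound on $\partial_{y_1}^{m+1}(q\circ\pi)$ degenerates; near such hyperplanes one must either switch to a different coordinate direction or invoke a separate chart, and patch the estimates together. This patching is precisely where the logarithmic factor $|\ln \epsilon|^l$ is generated, and verifying that the perturbation does not destroy any piece of the patched estimate requires care. In part 2, the difficulty lies in verifying that the constructed $m+1$ perturbation monomials are genuinely independent and that the resulting height drop $h' < h$ is strict, requiring a detailed Newton-polyhedron computation showing that $m+1$ is actually the minimum number of parameters needed to produce the critical cancellation.
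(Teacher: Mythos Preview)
Your approach to part 1 has a genuine gap. You resolve only $q$, obtaining $q\circ\pi(y)=u(y)\,y^{\alpha}$, and then propose to absorb the perturbation by noting that $\partial_{y_1}^{m+1}(s_i\circ\pi)$ is bounded on the compact chart while $\partial_{y_1}^{m+1}(q\circ\pi)$ is bounded below. But that lower bound is \emph{not} uniform: it carries a factor $y_2^{\alpha_2}\cdots y_n^{\alpha_n}$ and hence vanishes along every component of the exceptional divisor with $\alpha_j>0$. The pulled-back perturbations $s_i\circ\pi$ are arbitrary real analytic functions in the chart and have no reason to vanish on those hypersurfaces, so for every $\delta>0$ there is a full-measure neighborhood of the divisor on which $\delta\,\partial_{y_1}^{m+1}(s_i\circ\pi)$ dominates $\partial_{y_1}^{m+1}(q\circ\pi)$. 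Your proposed fix of ``switching directions or charts'' does not help, because the same phenomenon occurs in every direction and every chart: the monomialized $q$ degenerates along the divisor while the unresolved $s_i$ need not. This is exactly the difficulty the paper's proof is built to handle: it applies resolution of singularities \emph{simultaneously} to $q,t_1,\dots,t_m$ (after first fixing an orthonormal frame $v_1,\dots,v_m$ in coefficient space via the compactness Lemma~2.4), so that in the resolved coordinates \emph{every} term is a unit times a monomial and the exponent multi-indices are lexicographically ordered. The cone condition $|c_{i+1}|\le\zeta|c_i|$ then ensures that on each dyadic $y_n$-interval one of the monomials dominates, and a Vandermonde argument produces a lower bound on some $p$th derivative.

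A second, related issue: you take the $(m+1)$st derivative, which via Lemma~2.3 yields $\epsilon^{1/(m+1)}$ on a slice. Since $h=\tfrac{1}{m+\gamma}\ge\tfrac{1}{m+1}$, this is too weak to recover $\epsilon^{h}$ after integration except in the borderline case $\gamma=1$. The paper instead uses derivatives of order $0\le p\le m$ (the case $p=0$ is available precisely because this is the measure version rather than the oscillatory one: a lower bound on the function itself makes the sublevel set empty). This gives $\epsilon^{1/m}$ in the worst case, and since $h<\tfrac{1}{m}$ strictly, the final conversion back to the original coordinates via the asymptotics $(1.16)$ for $q$ yields $C\epsilon^{h}|\ln\epsilon|^{l}$ with no loss. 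The shift from $p\in\{1,\dots,m+1\}$ in the oscillatory setting to $p\in\{0,\dots,m\}$ here is exactly what makes the stated range of $m$ sharp.

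For part 2 your plan is in the right spirit but underspecified. The paper gives a clean explicit family: take $f(x)=x_1^{2k}+\cdots+x_n^{2k}$, choose $n,k$ so that $n/(2k)=ct$ for the least integer $c$ with $ct>1$, and set $q=f^{c}$. Then $h=t$, and the $(m+1)$-dimensional span of $\{f^{j}x_1^{\,c-j}:0\le j\le c-1\}$ contains $(f+\delta x_1)^c-f^c$ for every $\delta$, which drops the index to $1/c<t$. This is more direct than a Newton-polyhedron computation and makes the independence of the $s_i$ and the strict drop $h'<h$ immediate.
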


Note that when $h = {1 \over m}$ is the reciprocal of an integer, there is a difference of two in the number of perturbation functions in the two parts of Theorem 1.4 
instead of one. This is because when $\gamma = 0$, the arguments of the proof of part 1 lead to an additional factor of $|\ln \epsilon|$ in $(1.18)$.
Therefore the case $\gamma = 0$ must be excluded from the statement of part 1. Also, since in part 2) the $s_i(x)$ are functions of a number of variables that 
depends on $t$, it does not preclude analogues of the three-dimensional theorem of [PSSt] from holding in higher dimensions.

The methods we will use to prove part 1 of Theorem 1.4 will be quite similar to those used to prove Theorem 1.3 as the arguments in the proof of Theorem 1.3 apply
to nonlinear perturbation functions  equally well as linear perturbation functions. In place of the oscillatory integral Van der Corput lemmas, Lemma 2.1
 and Lemma 2.2, we will use the measure version, Lemma 2.3.

\section{The proof of parts 1-3 of  Theorems 1.2 and 1.3.}

As explained above Theorem 1.2, parts 1-3 of Theorem 1.2  follow from parts 1-3 of Theorem 1.3, so that is what we will prove in this section.

\subsection{Preliminary lemmas.}

We will make essential use of Van der Corput-type lemmas in our arguments. If $p > 1$, we use the traditional Van der Corput lemma 
(see p. 334 of [S1]):\\

\begin{lemma}
Suppose $h(x)$ is a real-valued $C^k$ function on the interval $[a,b]$ such that $|h^{(k)}(x)| > A$ on $[a,b]$ for
some $A > 0$. Let $\phi(x)$ be $C^1$ on $[a,b]$. 

\noindent If $k \geq 2$ there is a constant $c_k$ depending only on $k$ such that
\[\bigg|\int_a^b e^{ih(x)}\phi(x)\,dx\bigg| \leq c_kA^{-{1 \over k}}\bigg(|\phi(b)| + \int_a^b |\phi'(x)|\,dx\bigg) \tag{2.1}\]
If $k =1$, the same is true if we also assume that $h(x)$ is $C^2$ and $h'(x)$ is monotone on $[a,b]$. 
\end{lemma}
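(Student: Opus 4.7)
The plan is to prove Lemma 2.1 by induction on $k$, with the $k=1$ case treated separately by a direct integration by parts and the $k \geq 2$ case reduced to the $k-1$ case via a splitting argument.

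For the base case $k=1$, since $h'$ is monotone with $|h'| > A$, I would write $e^{ih(x)} = (ih'(x))^{-1}\frac{d}{dx}e^{ih(x)}$ and integrate by parts to obtain a boundary term plus two interior terms (from the product rule on $\phi/(ih')$). The boundary term is bounded by $A^{-1}|\phi(b)|$ up to a similar contribution at $a$, which I absorb using $|\phi(a)| \leq |\phi(b)| + \int_a^b|\phi'|$. The interior term involving $\phi'/(ih')$ is bounded by $A^{-1}\int_a^b|\phi'(x)|\,dx$. The remaining term involves $\phi \cdot \frac{d}{dx}(1/h')$; since $1/h'$ is monotone (as $h'$ is monotone and of one sign), its total variation is at most $2/A$, so this term is controlled by $2A^{-1}\sup|\phi|$, which in turn is absorbed into $A^{-1}(|\phi(b)| + \int|\phi'|)$.

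For the inductive step, assume the lemma holds for $k-1$ and suppose $|h^{(k)}(x)| > A$ on $[a,b]$. Then $h^{(k-1)}$ is strictly monotone, so for each $\lambda > 0$ the sublevel set $E_\lambda = \{x \in [a,b] : |h^{(k-1)}(x)| \leq \lambda\}$ is an interval of length at most $2\lambda/A$. On $E_\lambda$ I estimate trivially, bounding $\bigl|\int_{E_\lambda} e^{ih}\phi\,dx\bigr|$ by $(2\lambda/A)\sup|\phi|$, and then using $\sup|\phi| \leq |\phi(b)| + \int_a^b|\phi'|$. The complement $[a,b]\setminus E_\lambda$ consists of at most two subintervals on each of which $|h^{(k-1)}| > \lambda$; applying the inductive hypothesis with parameter $\lambda$ in place of $A$ gives a bound of $c_{k-1}\lambda^{-1/(k-1)}(|\phi(b)|+\int|\phi'|)$ on each such subinterval (with the $\phi$-factor either bounded directly or via the same fundamental-theorem estimate after adjusting endpoints).

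Combining yields an estimate of the form $C_k\bigl(\lambda/A + \lambda^{-1/(k-1)}\bigr)\bigl(|\phi(b)| + \int_a^b|\phi'|\bigr)$, and optimizing by equating the two terms gives $\lambda = A^{(k-1)/k}$, producing the desired $A^{-1/k}$ factor with a constant $c_k$ depending only on $k$. The only real subtlety is the bookkeeping for the $\phi$-factor in the inductive step: since the inductive hypothesis is applied on subintervals whose endpoints depend on $\lambda$ and $h^{(k-1)}$, one must consistently dominate the endpoint values and the variation of $\phi$ on the subintervals by the global quantities $|\phi(b)|$ and $\int_a^b|\phi'(x)|\,dx$. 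This is handled uniformly by the fundamental theorem of calculus bound $\sup_{[a,b]}|\phi| \leq |\phi(b)| + \int_a^b|\phi'|$, which is the main tool tying the splitting argument together.
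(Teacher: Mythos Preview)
Your argument is the standard induction-on-$k$ proof of the Van der Corput lemma and is correct. Note, however, that the paper does not supply its own proof of Lemma~2.1: it simply quotes the result and refers the reader to p.~334 of [S1] (Stein's \emph{Harmonic Analysis}). The proof you have written is essentially the one found there, so your approach matches the cited reference rather than differing from any argument in the paper itself.
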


\noindent  When $p = 1$, we will use the following variant of Lemma 2.1 that holds when $k = 1$.\\

\begin{lemma} (Lemma 2.2 of [G3].) Suppose the hypotheses of Lemma 2.1 hold for  $k = 1$, except  instead of assuming that $h'(x)$ is monotone on $[a,b]$ we assume that $|h''(x)|
< {B  \over(b-a)}A$ for some constant $B > 0$. Then we have
\[\bigg|\int_a^b e^{ih(x)}\phi(x)\,dx\bigg| \leq  A^{-1}\bigg((B+2) \sup_{[a,b]}|\phi(x)| + \int_a^b |\phi'(x)|\,dx \bigg) \tag{2.2}\]
\end{lemma}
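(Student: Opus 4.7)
The plan is to mimic the standard proof of the $k=1$ case of the classical van der Corput lemma, but to replace the use of monotonicity of $h'$ (which would normally allow a second mean value theorem argument) by an explicit pointwise estimate on $h''$ in the error term that arises from integrating by parts once. Since $|h'(x)| > A > 0$ throughout $[a,b]$, we may write $e^{ih(x)} = (ih'(x))^{-1}\frac{d}{dx}e^{ih(x)}$ and integrate by parts, which yields
\[\int_a^b e^{ih(x)}\phi(x)\,dx = \bigg[\frac{e^{ih(x)}\phi(x)}{ih'(x)}\bigg]_a^b - \int_a^b e^{ih(x)}\frac{\phi'(x)}{ih'(x)}\,dx + \int_a^b e^{ih(x)}\frac{\phi(x)\,h''(x)}{i\,h'(x)^2}\,dx.\]

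From here I would estimate the three terms separately. The boundary contribution is bounded by $2A^{-1}\sup_{[a,b]}|\phi|$ since each endpoint gives at most $A^{-1}|\phi|$. The first interior integral is controlled pointwise by $A^{-1}|\phi'(x)|$, which integrates to $A^{-1}\int_a^b|\phi'(x)|\,dx$. For the last interior integral, I would invoke the new hypothesis $|h''(x)| < BA/(b-a)$ together with $|h'(x)| > A$ to conclude $|h''(x)/h'(x)^2| < B/(A(b-a))$, so that this integral is bounded by $B A^{-1}\sup_{[a,b]}|\phi|$ after multiplying by the length $b-a$ of the interval. Summing the three bounds gives $A^{-1}\bigl((B+2)\sup_{[a,b]}|\phi(x)| + \int_a^b|\phi'(x)|\,dx\bigr)$, which is exactly $(2.2)$.

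There is no real obstacle; this is essentially a routine integration by parts in which the only subtlety is how the bound on $h''$ is used to replace monotonicity. The coefficient $B+2$ arises from combining the $2$ coming from the two endpoints of the boundary term with the $B$ coming from the $h''/(h')^2$ error term, and the factor $b-a$ appearing in the hypothesis on $h''$ is precisely what cancels against the length of the interval over which that error term is integrated, leaving a clean $B/A$ constant.
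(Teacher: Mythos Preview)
Your argument is correct. The integration by parts is valid since $h'$ never vanishes, the three resulting terms are each estimated exactly as you say, and the constants combine to give precisely $(B+2)\sup|\phi| + \int|\phi'|$, all multiplied by $A^{-1}$.

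As for comparison with the paper: the present paper does not actually prove this lemma; it simply quotes it as Lemma 2.2 of [G3] and uses it as a black box. So there is no in-paper proof to compare against. That said, your argument is the natural (and essentially the only) way to obtain the stated constant $B+2$: the $2$ can only come from the two boundary evaluations, and the $B$ from integrating the pointwise bound $|h''/(h')^2| < B/(A(b-a))$ over an interval of length $b-a$. This is exactly the modification of the classical $k=1$ van der Corput argument one would expect, with the second-derivative hypothesis substituting for monotonicity in controlling the $h''/(h')^2$ term.
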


We will also make use of the following measure version of the Van der Corput lemma. We refer to [C] for a proof.

\begin{lemma} Suppose $k \geq 1$ and $f(t)$ is a $C^k$ function on an interval $I$ such that $\displaystyle \bigg|{\partial^k f \over \partial t^k}\bigg|> A\displaystyle$ on $I$, where 
$A > 0$. There is a constant $B_k$ depending on $k$ only such that for each $\epsilon > 0$ the following holds.
\[m(\{t \in I: |f(t)| < \epsilon\}) \leq B_k A^{-{1 \over k}}\epsilon^{1 \over k}\]
\end{lemma}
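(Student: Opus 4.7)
The plan is to prove Lemma 2.3 by induction on $k$. For the base case $k=1$, the hypothesis $|f'(t)| > A$ on $I$ forces $f$ to be strictly monotone, so the sublevel set $\{t \in I : |f(t)| < \epsilon\}$ is a single subinterval whose image under $f$ has diameter at most $2\epsilon$; combined with $|f'| > A$ this gives a length bound of $2\epsilon/A$, so $B_1 = 2$ suffices.

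For the inductive step, assume the result holds for $k-1$ with constant $B_{k-1}$, and suppose $|f^{(k)}(t)| > A$ on $I$. Since $f^{(k-1)}$ then has derivative of constant sign and magnitude exceeding $A$, it is strictly monotone, hence has at most one zero, partitioning $I$ into at most two subintervals on each of which $f^{(k-1)}$ keeps constant sign. Fix one such piece $J$ and a parameter $\lambda > 0$ to be chosen. Split $J = J_1 \cup J_2$ according to whether $|f^{(k-1)}(t)| \leq \lambda$ or $|f^{(k-1)}(t)| > \lambda$. Monotonicity of $f^{(k-1)}$ together with the lower bound on $|f^{(k)}|$ implies that $J_1$ is a single subinterval of length at most $2\lambda/A$, which trivially bounds $m(\{t \in J_1 : |f(t)| < \epsilon\})$. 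On $J_2$ (also a single subinterval, by monotonicity and constant sign of $f^{(k-1)}$) we have $|f^{(k-1)}| > \lambda$, so the inductive hypothesis yields
\[
m(\{t \in J_2 : |f(t)| < \epsilon\}) \leq B_{k-1} \lambda^{-1/(k-1)} \epsilon^{1/(k-1)}.
\]
Thus the contribution from $J$ is at most $2\lambda/A + B_{k-1} \lambda^{-1/(k-1)} \epsilon^{1/(k-1)}$. Optimizing by taking $\lambda$ proportional to $A^{(k-1)/k} \epsilon^{1/k}$ balances the two summands and produces a bound of the shape $C_k A^{-1/k} \epsilon^{1/k}$; doubling to account for the at most two sign-subintervals of $f^{(k-1)}$ in $I$ yields the claim with an explicit $B_k$.

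The only substantive choice is the balancing parameter $\lambda$; the remainder is bookkeeping, chiefly verifying that once we restrict to a subinterval on which $f^{(k-1)}$ is monotone, the superlevel set $\{|f^{(k-1)}| > \lambda\}$ is itself an interval so that the inductive hypothesis applies directly. I do not anticipate a genuine obstacle here: the argument rests on the elementary observation that a strictly monotone $C^1$ function whose derivative is bounded below has Lipschitz inverse with controlled constant, which is exactly what powers both the base case and the estimate for $J_1$ at each stage of the induction.
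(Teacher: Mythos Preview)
Your induction argument is correct and is precisely the standard proof of the sublevel-set Van der Corput lemma. Note that the paper itself does not supply a proof of this lemma but simply cites [C] (Christ); the argument you give is essentially the one found there and in many textbook treatments, so there is nothing to compare.
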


\
\noindent We next prove a compactness lemma needed for our arguments. In the following, for mutually orthogonal unit vectors
 $v_1,...,v_{n+1} \in \R^{n+1}$ and $\epsilon > 0$, we let
$ A_{v_1,...,v_{n+1},\epsilon} = \{c_1v_1 + ... + c_{n+1}v_{n+1}: |c_1| \leq 1, |c_{i+1}| \leq \epsilon|c_i|$ for all $i \geq 1\}$.

\vskip 0.2 in
\begin{lemma}
Suppose  for each set of mutually orthogonal unit vectors $v_1,...,v_{n+1}$ in $\R^{n+1}$ the set $B_{v_1,...,v_{n+1}}$ is of the form
$ A_{v_1,...,v_{n+1},\epsilon}$  for some $\epsilon > 0$. Then there is a finite collection $\{B_i\}_{i=1}^N$ where each $B_i$ is some 
$B_{v_1,...,v_{n+1}}$, such that $\{x: |x| \leq 1 \} \subset \cup_{i=1}^N B_i$.
\end{lemma}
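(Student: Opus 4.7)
The plan is to prove this by induction on the dimension $n+1$, exploiting a recursive structure: if we fix $v_1$ and decompose $y = y_1 v_1 + \tilde{y}$ with $\tilde y \in v_1^\perp$, the remaining cascading conditions on $\tilde y$ amount to a rescaling of an analogous $A$-set inside $v_1^\perp \cong \R^n$. The base case $n+1 = 1$ is trivial, since the cascading conditions are vacuous and $A_{v_1,\epsilon} = \{cv_1 : |c| \leq 1\}$ is already the unit ball.

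For the inductive step, I would fix $v_1 \in S^n$ and apply the induction hypothesis to the restricted function $\epsilon'_{v_1}(v_2,\ldots,v_{n+1}) := \epsilon(v_1,v_2,\ldots,v_{n+1})$ on orthonormal bases of $v_1^\perp$. This yields finitely many orthonormal bases $(v_2^{(j)},\ldots,v_{n+1}^{(j)})$ of $v_1^\perp$ for $j=1,\ldots,J_{v_1}$, with associated values $\epsilon^{(j)}$, whose $A$-sets cover the unit ball of $v_1^\perp$. Set $\eta_{v_1} := \min_j \epsilon^{(j)} > 0$. The key claim is that the union of the sets $B^{(j)} := B_{v_1,v_2^{(j)},\ldots,v_{n+1}^{(j)}}$ in $\R^{n+1}$ already contains the full cone
\[
C_{v_1} := \{y : |\langle y,v_1\rangle| \leq 1,\ |y - \langle y,v_1\rangle v_1| \leq \eta_{v_1}|\langle y,v_1\rangle|\}.
\]
Indeed, for $y \in C_{v_1}$ with $y_1 := \langle y,v_1\rangle \neq 0$, the vector $\tilde y/(\eta_{v_1}|y_1|)$ lies in the unit ball of $v_1^\perp$, hence in some $A$-set from the induction; then using the dilation-invariance property that $z \in A_{\ldots,\epsilon}$ and $|\lambda| \leq 1$ imply $\lambda z \in A_{\ldots,\epsilon}$, together with $\eta_{v_1} \leq \epsilon^{(j)}$, one obtains $\tilde y/(\epsilon^{(j)}|y_1|) \in A_{v_2^{(j)},\ldots,\epsilon^{(j)}}$, i.e., $y \in B^{(j)}$.

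The cone $C_{v_1}$ meets $S^n$ in an open spherical cap around $v_1$ of angular radius $\arctan \eta_{v_1} > 0$, so $\{C_{v_1} \cap S^n\}_{v_1 \in S^n}$ is an open cover of $S^n$. Compactness of $S^n$ extracts a finite subcover, and pooling the finitely many bases attached to each selected $v_1$ produces a finite collection of $B$-sets whose union contains $S^n$. Since each $B$-set is itself invariant under $z \mapsto \lambda z$ for $|\lambda| \leq 1$ (the cascading inequalities survive this scaling), this same finite collection then covers the entire unit ball $\{|x| \leq 1\}$.

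The main obstacle is the inductive step: the cascading conditions $|c_{i+1}| \leq \epsilon|c_i|$ are not stable under arbitrary small perturbations of the individual coordinates, so one cannot hope to cover a neighborhood of a general point on $S^n$ using a single basis, and the naive choice $v_1 = x/|x|$ already fails for $n+1 = 3$. The trick of introducing $\eta_{v_1} = \min_j \epsilon^{(j)}$ and invoking dilation-invariance is precisely what allows the inductive covering inside $v_1^\perp$ to be transferred back to a genuine conic covering in the ambient space.
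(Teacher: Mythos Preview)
Your proposal is correct and follows essentially the same route as the paper: induction on the ambient dimension, fixing $v_1$ and applying the inductive hypothesis inside $v_1^\perp$ to produce finitely many $B$-sets whose union contains a cone around $v_1$, then invoking compactness of $S^n$ and the scaling invariance $z\mapsto \lambda z$, $|\lambda|\le 1$, to pass from $S^n$ to the full unit ball. The only cosmetic differences are that the paper starts the induction at dimension~$2$ (using compactness of $S^1$) and carries out the lower-dimensional step in the affine hyperplane $\{x\cdot v_1=1\}$ rather than in the linear subspace $v_1^\perp$; your quantity $\eta_{v_1}=\min_j\epsilon^{(j)}$ plays the same role as the paper's $\delta$.
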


\begin{proof} We induct on the dimension. For $n = 1$ the lemma follows immediately from the compactness of the unit circle, so we assume the result
is known for $n-1$ and seek to prove it for some $n \geq 2$. For a fixed $v_1$, for some $ \epsilon_{v_2,...,v_{n+1}} > 0$ a given $B_{v_1,...,v_{n+1}}$ contains $C_{v_2,...,v_{n+1}} 
 = \{v_1 + c_2v_2 + ... + c_{n+1}v_{n+1}: |c_2| \leq \epsilon_{v_2,...,v_{n+1}}, |c_{i+1}| \leq  \epsilon_{v_2,...,v_{n+1}}|c_i|$ for all $i \geq 2\}$. Let $D_{v_2,...,v_{n+1}}$ denote the dilate of $C_{v_2,...,v_{n+1}}$ by a factor of  $(\epsilon_{v_2,...,v_{n+1}})^{-1}$
 in the hyperplane $\{x: x \cdot v_1 = 1\}$, where the dilation is centered at $v_1$. Then the sets $D_{v_2,...,v_{n+1}}$ satisfy the 
hypotheses of this lemma in this hyperplane in one lower dimension. Thus we may apply the induction hypothesis to conclude that there is a finite collection 
of $D_{v_2,...,v_{n+1}}$ covering $\{x: x \cdot v_1 = 1, |x - v_1| \leq 1\}$.
 
Going back to the sets $C_{v_2,...,v_{n+1}}$, we see that there is therefore a finite collection of sets $C_{v_2,...,v_{n+1}}$ covering a set of the form
$\{x: x \cdot v_1 = 1, |x - v_1| \leq \delta \}$, where $\delta >0$. Since $C_{v_2,...,v_{n+1}} \subset B_{v_1,...,v_{n+1}}$, this means the corresponding finite
collection of $B_{v_1,...,v_{n+1}}$ also cover $\{x: x \cdot v_1 = 1, |x - v_1| \leq \delta \}$. But $B_{v_1,...,v_{n+1}}$ is defined so that
if $(x_1,...,x_{n+1}) \in  B_{v_1,...,v_{n+1}}$ then so is $(tx_1,...,t x_{n+1})$ for any $0 \leq  t \leq 1$. Hence this finite list of $B_{v_1,...,v_{n+1}}$ cover
a cone containing $v_1$. 

The above argument applies for any $v_1$ with $|v_1| = 1$, so by compactness of $S^n$, there is a finite list of $B_{v_1,...,v_{n+1}}$ covering all of
$\{x: |x| \leq 1 \}$. This concludes the proof of the lemma. \end{proof}

\subsection {\bf The proof of parts 1-3 of Theorem 1.3.}

As before, after rotating and translating if necessary we may assume that $x_0$ in Theorem 1.3 is the origin and that the vector $(0,...,0,1)$ is normal to 
$\partial D$ there. Writing $\partial D$ as the graph of some real analytic function $f(x_1,...,x_n)$  with $f(0) = 0$ and $\nabla(0) = 0$, one can 
write the Fourier transform of $\phi(x)\mu$ as follows:
\[\widehat{\phi(x)\mu}(\xi) = \int_{\R^n}\phi_0(x_1,...,x_n) e^{-i\xi_{n+1}f(x_1,...,x_n) - i\xi_1x_1 -... -i\xi_nx_n}\,dx_1...\,dx_n \tag{2.3}\]
Here $\phi_0(x_1,...,x_n) = \phi(x_1,...,x_n,f(x_1,...,x_n))$.
As before, we may assume that $|\xi_{n+1}| > \sum_{i=1}^n |\xi_n|$ since by repeated integrations by parts one has arbitrarily fast decay
 in the remaining directions.
 
Write $\xi = |\xi| \xi'$, where $|\xi'| = 1$. By Lemma 2.4, in order to prove Theorem 1.3 it suffices to show that for each $v_1,...,v_{n+1}$ there is
some $\epsilon > 0$ (depending on $v_1,...,v_{n+1}$) such that the estimates of Theorem 1.3 hold for $\xi' \in A_{v_1,...,v_{n+1},\epsilon} $. To this
end, for a given $v_1,...,v_{n+1}$ we write $\xi' = c_1v_1 + ... + c_{n+1}v_{n+1}$ where $|c_1| \leq 1, |c_{i+1}| \leq \epsilon|c_i|$ for all $i \geq 1$, 
where $\epsilon$ will be determined by our arguments. Since $|\xi'| = 1$ and $\epsilon$ will be small, we will always have ${1 \over 2} \leq |c_1| \leq 1$.

 Observe that the phase in $(2.3)$ is given by $\xi \cdot (x_1,...,x_n, f(x_1,...,x_n))$, which can be rewritten as
$|\xi| \sum_{i=1}^{n+1} c_i v_i \cdot (x_1,...,x_n, f(x_1,...,x_n))$. Thus it makes sense to define $g_i(x) = v_i \cdot (x_1,...,x_n, f(x_1,...,x_n))$, in which
case $(2.3)$ becomes
\[\widehat{\phi(x)\mu}(\xi) = \int_{\R^n}\phi_0(x) e^{-i|\xi|(c_1g_1(x) + ... c_{n+1}g_{n+1}(x))}\,dx_1...\,dx_n \tag{2.4}\]

We next apply resolution of singularities to the functions $g_1(x),...,g_{n+1}(x)$ simultaneously. For our purposes it will not matter exactly which resolution of singularities procedure we use, and for example Hironaka's famous work [H1] [H2] more than suffice. By such resolution of singularities, there is a neighborhood $N$ of 
the origin such that if $\alpha(x)$ is a smooth function supported in $N$ then $\alpha(x)$ can be written as $\sum_{j=1}^M \alpha_j(x)$ 
 such that for each $j$ there is a real analytic variable change $\Psi_j$ such that each $\alpha_j(\Psi_j(x))$ is smooth and on a neighborhood of the support of 
$\alpha_j(\Psi_j(x))$ each function
$g_i \circ \Psi_j(x)$ can be written in the form $a_{ij}(x)m_{ij}(x)$, where $a_{ij}(x)$ is real analytic and does not vanish on the support of $\alpha_j(\Psi_j(x))$
and $m_{ij}(x)$ is a nonconstant monomial $x_1^{\alpha_{ij1}}...x_{n}^{\alpha_{ijn}}$. By resolving the singularities of each $g_{i_1}(x) - 
g_{i_2}(x)$ for $i_1 \neq i_2$ at the same time, we can ensure that in the resolved coordinates, for a given $j$ the multiindices $\alpha_{ij} = (\alpha_{ij1},...,\alpha_{ijn})$ 
are lexicographically ordered (not necessarily strictly) in some ordering. 

We let $\alpha(x)$ be such a function that is equal to 1 on a neighborhood $N'$ of the origin, and without loss of generality we may assume that 
$\phi_0(x)$ is supported on $N'$. Writing
$\phi_j(x) = \phi_0(x)\alpha_j(x)$, we have that $\sum_{j=1}^M\phi_j(x) = \phi_0(x)$, and each $\phi_j(\Psi_j(x))$ is smooth.

We write $\widehat{\phi(x)\mu}(\xi) = \sum_{j=1}^N b_j(\xi)$ in accordance with the decomposition $\phi_0(x) =\sum_{j=1}^M \phi_j(x) $ as
inserted in $(2.4)$, 
and then perform the coordinate changes $\Psi_j(x)$  obtaining
\[ b_j(\xi) = \int_{\R^n}\phi_j(\Psi_j(x)) Jac_j(x)e^{-i|\xi|(c_1a_{1j}(x)m_{1j}(x) + ... c_{n+1}a_{n+1\,j}(x)m_{n+1\,j}(x))}\,dx_1...\,dx_n \tag{2.5}\]
Here $Jac_j(x)$ denotes the Jacobian determinant of the coordinate changes $\Psi_j(x)$.  Our resolution of singularities procedure can be done so
 that $Jac_j(x)$ is also comparable to a monomial, which will be useful in our analysis. We let $\bar{\phi}_j(x)$ denote $ \phi_j(\Psi_j(x))$, so
that  $\bar{\phi}(x)$ is a smooth function and $(2.5)$ becomes
\[ b_j(\xi) = \int_{\R^n}\bar{\phi}_j(x)Jac_j(x)e^{-i|\xi|(c_1a_{1j}(x)m_{1j}(x) + ... c_{n+1}a_{n+1\,j}(x)m_{n+1\,j}(x))}\,dx_1...\,dx_n \tag{2.5'}\]
Here the functions $a_{ij}(x)$ are nonvanishing on the support of $\bar{\phi}_j(x)$.
Our goal now is to show that each $b_j(\xi)$ satisfies the bounds of Theorem 1.3 if the $\epsilon$ determining the coefficients $c_i$ is sufficiently small.
In order for our arguments to work, we will need that for any given $j$ there is a single $l$ such that the $l$th component $\alpha_{ijl}$ is nonzero for all $i$
and such that $\alpha_{i_1j} > \alpha_{i_2j}$ implies $\alpha_{i_1jl} > \alpha_{i_2jl}$.
In order to ensure this is the case, we perform an additional (relatively simple) resolution of singularities. 

We proceed as follows. We divide $\R^n$
 (up to a set of measure zero) into $n!$ regions $\{A_k\}_{k=1}^{n!}$, where 
each $A_k$ is a region of the form $\{x \in \R^n: |x_{l_1}| < |x_{l_2}| < ... < |x_{l_n}|\}$, where  $x_{l_1},...,x_{l_n}$ is a permutation of
the $x$ variables. We write $(2.5')$ as $\sum_{k=1}^{n!} b_{jk}(\xi)$, where 
\[ b_{jk}(\xi) = \int_{A_k}\bar{\phi}_j(x)Jac_j(x)e^{-i|\xi|(c_1a_{1j}(x)m_{1j}(x) + ... + c_{n+1}a_{n+1\,j}(x)m_{n+1\,j}(x))}\,dx_1...\,dx_n \tag{2.6}\]
We focus our attention one one such $A_k$ and we let $u_i$ denote $x_{l_i}$. We make the variable changes $u_i = \prod_{p=i}^n y_p$. In the $y$ variables, $A_k$  becomes the rectangular box $(-1,1)^{n-1} \times \R$ and $(2.6)$ can be written in the form
\[ b_{jk}(\xi) = \int_{(-1,1)^{n-1} \times \R}\tilde{\phi}_{jk}(y)J_{jk}(y)e^{-i|\xi|(c_1b_{1jk}(y)p_{1jk}(y) + ... + c_{n+1}b_{n+1\,jk}(y)p_{n+1\,jk}(y))}\,dy_1...\,dy_n \tag{2.7}\]
Here $J_{jk}(y)$ is the Jacobian of the combined coordinate change, once again comparable to a monomial, the $p_{ijk}(y)$ are again monomials and the $b_{ijk}(y)$ are nonvanishing real analytic functions on the support of the smooth compactly
supported function
$\tilde{\phi}_{jk}(y)$. Like above, for fixed $j$ and $k$ the exponents of the monomials $p_{ijk}(y)$ are lexicographically ordered (not necessarily 
strictly) in some ordering. Furthermore, 
note that each $p_{ijk}(y)$ is of the form $y_1^{\beta_{ijk1}}...y_n^{\beta_{ijkn}}$ where $\beta_{ijkn}$ is the overall degree of the monomial 
$m_{ij}(x)$. In particular, each $\beta_{ijkn}$ is nonzero and $\beta_{i_1jk} > \beta_{i_2jk}$ implies $\beta_{i_1jkn} > \beta_{i_2jkn}$, 
achieving the goal of this second resolution of singularities. 

To perform the analysis $(2.7)$ we will use the Van der Corput lemma on $(2.7)$ in $y_n$ variable. If there are $i_1 > i_2$ such that 
$\beta_{i_1jk} \geq \beta_{i_2jk}$, then if the $\epsilon$ such that $|c_{i+1}| \leq \epsilon |c_i|$ for each $i$ is small enough, the term 
$ c_{i_1}b_{i_1jk}(y)p_{i_1jk}(y)$ will be negligible in comparison with $ c_{i_2}b_{i_2jk}(y)p_{i_2jk}(y)$ when performing such applications of the 
Van der Corput lemma. As a result the phase in $(2.7)$ will effectively have terms $c_ib_{ijk}(y)p_{ijk}(y)$ for which the $\beta_{ijk}$ are 
*strictly* decreasing in $i$. Furthermore,  the $\beta_{ijkn}$ too will be effectively strictly decreasing in $i$, as well as nonzero. The latter facts are needed
for our applications of the Van der Corput lemma to be successful.

Next, we show that we may assume that there is a constant $\delta_0 > 0$ to be determined by our arguments such that on the domain
 of integration of $(2.7)$, for each $i$ and each $1 \leq p \leq n+1$ we have
\[|\partial_{y_n}^p(b_{ijk}(y)p_{ijk}(y)) - \partial_{y_n}^p (b_{ijk}(y_1,...,y_{n-1},0)p_{ijk}(y))| \leq \delta_0 {1 \over |y_n|^p}|b_{ijk}(y)p_{ijk}(y)| \tag{2.8}\] 
Since the $b_{ijk}(y)$ are smooth functions and $p_{ijk}(y)$ is a monomial, this is necessarily true over the portion 
of the domain where $|y_n| < c$ for some $c > 0$ determined by $\delta_0$, and I claim that we can arrange that 
 such an inequality $|y_n| < c$ is automatically satisfied over the domain of integration of $(2.7)$.

To see why this is the case, first note that in terms of the $x$ variables of $(2.6)$, one will have that $|y_n| < c$ if one has  
$|x| < c'$ for some $c'$ depending on $c$. 
Next, let $x'$ be in the support of some $\alpha_j(\Psi_j(x))$ such that $\Psi_j(x') = 0$. Here $\alpha_j$ is as defined after $(2.4)$. There is a neighborhood of $x'$ on which we may shift 
coordinates to become centered at $x = x'$ instead of
$x = 0$; the functions that were monomialized before will be 
monomialized in the shifted coordinates, and the multiindices in the shifted coordinates will be lexicographically ordered in the shifted coordinates 
just as before.

Let $D_{x'}$ be a disk centered at $x'$ such that  the above considerations hold on $D_{x'}$ and such that 
each $|x|$ is less than the associated $c'$ on $D_{x'}$ in the coordinates centered at $x'$. By compactness, we
may let $W_j$ be a neighborhood of the points in the support of $\alpha_j(\Psi_j(x))$ with $\Psi_j(x) = 0$, that is a finite union of such $D_{x'}$. There is 
then an $r_{W_j} > 0$
such that if the original cutoff function $\phi(x)$ is supported on $|x| < r_{W_j}$, then the function 
$\phi_j(\Psi_j(x)) = \alpha_j(\Psi_j(x))\phi(\Psi_j(x))$, will be supported in $W_j$. (Otherwise, there would be a point outside of $W_j$ in the support 
of $\alpha_j(\Psi_j(x))$ getting mapped to the origin by $\Psi_j$).

We can then use a partition of unity subordinate to the $D_{x'}$ comprising $W_j$ to write 
$\phi_j(\Psi_j(x))$ as a finite sum $\sum_l{\phi}_{jl}(x)$ of smooth functions such that each ${\phi}_{jl}(x)$ is supported on one of the $D_{x'}$. 
We denote the $x'$ corresponding to ${\phi}_{jl}(x)$ by $x_{jl}'$.
If we use the ${\phi}_{jl}(x + x_{jl}')$ in place of $\bar{\phi}_j(x) = {\phi}_j(\Psi_j(x))$ and the ${\phi}_{jl} (\Psi_j^{-1}(x) - x_{jl}'))$ in place
of ${\phi}_j(x)$, then the associated $|y_n|$ will be as small as needed above. Thus so long as the original cutoff function $\phi(x)$ is supported on 
$\{x: |x| < \min_j r_{W_j}\}$,  we have arranged that for  each $j$ and $k$ the variable $|y_n|$ will be smaller than the $c$ needed for
 $(2.8)$ to hold, as desired.

We now proceed to the analysis of $(2.7)$. We consider the following variant of the phase function of $(2.7)$.
\[|\xi|\sum_{i \in K} c_i b_{ijk}(y_1,...,y_{n-1},0)p_{ijk}(y) \tag{2.9}\]
Here $K$ consists of those indices $i$ for which there is no $i'$ with $i' < i$ and $\beta_{i'jk} \leq \beta_{ijk}$ (equivalently, $\beta_{i'jkn} \leq \beta_{ijkn})$).
We view $(2.9)$ as a function of $y_n$ for fixed $(y_1,...,y_n)$. We suppress the
$y_1,...,y_{n-1}$ variables and the indices $j$ and $k$, writing $\sigma_i = \beta_{ijkn}$ and  letting $d_i = b_{ijk}(y_1,...,y_{n-1},0)y_1^{\beta_{ijk1}}...y_{n-1}^{\beta_{ijk\,n-1}}$, and write $(2.9)$ as  

\[h(y_n) = |\xi|\sum_{i \in K} c_i d_i y_n^{\sigma_i} \tag{2.10}\]

Write the $\sigma_i$ appearing in $(2.10)$ as $\sigma_{n_i}$ for $1 \leq i \leq |K|$. Since the $\sigma_i$ are distinct and nonzero, the vectors 
$\{(\sigma_{n_1}^p,...,\sigma_{n_{|K|}}^p): p = 1,...,|K|\}$ are linearly independent (as 
can be shown using the Vandermonde determinant). As a result, using row operations the vectors $w_p = (\prod_{q=0}^{p-1}(\sigma_{n_1} - q), ...,
\prod_{q=0}^{p-1} (\sigma_{n_{|K|}} - q) )$ are linearly independent for $p = 1,...,|K|$. As a result, there exists an $\eta > 0$ such that for
any vector $v \in \R^n$ there exists a $p$ with $1 \leq p \leq |K|$ such that
\[|w_p \cdot v| \geq \eta|v| \tag{2.11}\]

Letting $v =  c_{n_i} d_{n_i} y_n^{\sigma_{n_i}}$, equation $(2.11)$ implies that for each $y_n$ there necessarily exists a $p$ with $1 \leq p \leq |K|$ (which can depend on $y_n$) such that
\[\bigg|{d^p h  \over d y_n^p}(y_n)\bigg| > \eta{1 \over |y_n|^p} |\xi| \sum_{i \in K} |c_i d_i y_n^{\sigma_i} | \tag{2.12}\]
Since there is a constant $C$ such that $\bigg|{d^{p+1} h \over d y_n^{p+1}}(y_n)\bigg| < C{1 \over |y_n|^{p+1}} |\xi| \sum_{i \in K} |c_i d_i y_n^{\sigma_i} | $
for all $p$ with $1 \leq p \leq |K|$ and all $y_n$, in view of $(2.12)$ each dyadic interval in $y_n$ can be written as the union of boundedly many subintervals on each of which we have
for some $1 \leq p \leq |K|$ that
\[\bigg|{d^p h \over d y_n^p}(y_n)\bigg| > {\eta \over 2} {1 \over |y_n|^p} |\xi| \sum_{i \in K} |c_i d_i y_n^{\sigma_i} | \tag{2.13}\]
We would like $(2.13)$ to hold with $h(y_n)$ replaced by $|\xi| \sum_{i \in K} c_i b_{ijk}(y)p_{ijk}(y)$. That is, we want to be able to 
replace the $ b_{ijk}(y_1,...,y_{n-1},0)y_1^{\beta_{ijk1}}...y_{n-1}^{\beta_{ijk\,n-1}}$ factors in $h(t)$ by the function
$b_{ijk}(y)y_1^{\beta_{ijk1}}...y_{n-1}^{\beta_{ijk\,n-1}}$. This follows from $(2.8)$, which by the discussion after $(2.8)$ holds if  $|y_n| < c$,
which we showed we may assume. Hence letting $j(y) = |\xi|\sum_{i \in K} c_ib_{ijk}(y)p_{ijk}(y)$ we can assume that in place of $(2.13)$ we have
\[\bigg|{d^p j \over d y_n^p}(y_n)\bigg| > {\eta \over 4} {1 \over |y_n|^p} |\xi| \sum_{i \in K} |c_i d_i y_n^{\sigma_i} | \tag{2.14}\]
Next, we would like for $(2.14)$ to hold with $j(y)$ replaced by the phase function $P(y) = |\xi| \sum_{i=1}^{n+1} c_ib_{ijk}(y)p_{ijk}(y)$ of $(2.7)$, and with the sum on the right hand
side being over all $i$ and not just all $i \in K$. In other words, in both sides of the equation we would like to include the terms with $i \notin K$. 

If $i \notin K$, there must be an $i' < i$
such that $\beta_{i'jk} \leq \beta_{ijk}$. As a result, if the $\epsilon$ such that $|c_{j+1}| \leq \epsilon |c_j|$ for all $j$ is chosen sufficiently small, the term $|c_i d_iy_n^{\sigma_i}|$ will be
less than say ${\eta \over 100 n} |c_{i'} d_{i'}y_n^{\sigma_{i'}}|$. Thus including such terms will not affect the right-hand side of $(2.14)$ beyond a factor of 2. Similarly, if $\epsilon$ is 
sufficiently small then for any $i \notin K$ and any $1 \leq p \leq |K|$ the term ${\partial^p \over \partial y_n^p} (c_ib_{ijk}(y)y_1^{\beta_{ijk1}}...y_{n}^{\beta_{ijkn}})$ can be made smaller than
${\eta \over 100 n}{1 \over |y_n|^p} |c_{i'} d_{i'} y_n^{\sigma_{i'}}|$, assuming as we may that $|y_n| < c$ for a sufficiently small $c$. Hence including such terms will not affect $(2.14)$ beyond an additional factor of 2. Thus we may include these
new terms in both sides of $(2.14)$ and we obtain that on each $y_n$-interval in question there is a $p$ with $1 \leq p \leq  |K|$ such that
\[\bigg|{d^p \over d y_n^p}\bigg(|\xi| \sum_{i=1}^{n+1}c_i b_{ijk}(y)p_{ijk}(y)\bigg) \bigg| > {\eta \over 16} {1 \over |y_n|^p} |\xi| \sum_{i =1}^{n+1}|c_i d_i y_n^{\sigma_i} | \tag{2.15}\]
In the integral $(2.7)$, for fixed $y_1,...,y_{n-1}$, we now apply Lemma 2.1 or 2.2 in the $y_n$ directions on one of the boundedly many subintervals of some dyadic 
interval $2^{-q-1} \leq |y_n| < 2^{-q}$ on which $(2.15)$ holds. Denote this subinterval by $J$ and let $y_n'$ denote the center of this interval.
We obtain
\[\bigg|\int_{J}\tilde{\phi}_{jk}(y)J_{jk}(y)e^{-i|\xi|(c_1b_{1jk}(y)p_{1jk}(y) + ... + c_{n+1}b_{n+1\,jk}(y)p_{n+1\,jk}(y))}\,dy_n\bigg| \]
\[\leq C J_{jk}(y_1,...,y_{n-1},y_n')|y_n'| (|\xi|\sum_{i=1}^{n+1} |c_id_i(y_n')^{\sigma_i}|)^{-{1 \over p}} \tag{2.16}\]
Note that we use the fact that $J_{jk}(y)$ is comparable to a monomial when applying Lemma 2.1 or Lemma 2.2 here, since this implies that $J_{jk}(y)$
is comparable to the fixed value $J_{jk}(y_1,...,y_{n-1},y_n')$ on $J'$, and gives appropriate bounds on its derivatives. 

Because ${1 \over 2} \leq c_1 \leq 1$ and the other $c_i$ can be arbitrarily small, even
zero, it makes sense to only include the first term in the sum on the right hand side of $(2.16)$, and we obtain
\[\bigg|\int_{J}\tilde{\phi}_{jk}(y)J_{jk}(y)e^{-i|\xi|(c_1b_{1jk}(y)p_{1jk}(y) + ... + c_{n+1}b_{n+1\,jk}(y)p_{n+1\,jk}(y))}\,dy_n\bigg| \]
\[\leq C J_{jk}(y_1,...,y_{n-1},y_n')|y_n'| (|\xi| |d_1(y_n')^{\sigma_1}|)^{-{1 \over p}} \tag{2.17}\]
Since $y_n \sim y_n'$ on $J$,  if $J'$ denotes the dyadic interval containing $J$  this can be rewritten as 
\[\bigg|\int_{J}\tilde{\phi}_{jk}(y)J_{jk}(y)e^{-i|\xi|(c_1b_{1jk}(y)p_{1jk}(y) + ... + c_{n+1}b_{n+1\,jk}(y)p_{n+1\,jk}(y))}\,dy_n\bigg| \]
\[\leq C \int_{J'} J_{jk}(y)
(|\xi||d_1(y_n')^{\sigma_1}|)^{-{1 \over p}}\,dy_n \tag{2.18}\]
Since $\sigma_1 = \beta_{1jkn}$ and $d_1 = b_{1jk}(y_1,...,y_{n-1},0)y_1^{\beta_{1jk1}}...y_{n-1}^{\beta_{1jk\,n-1}}$ is comparable in magnitude on $J$ to 
$b_{1jk}(y_1,...,y_{n-1},y_n)y_1^{\beta_{1jk1}}...y_{n-1}^{\beta_{1jk\,n-1}}$, the factor $|d_1(y_n')^{\sigma_1}|$ on the right-hand side of $(2.18)$ is
comparable in magnitude to $| b_{1jk}(y)y^{\beta_{1jk}}| = | b_{1jk}(y)p_{ijk}(y)| $. Consequently, $(2.18)$ can be rewritten as
\[\bigg|\int_{J}\tilde{\phi}_{jk}(y)J_{jk}(y)e^{-i|\xi|(c_1b_{1jk}(y)p_{1jk}(y) + ... + c_{n+1}b_{n+1\,jk}(y)p_{n+1\,jk}(y))}\,dy_n\bigg| \]
\[\leq C \int_{J'} J_{jk}(y)
(|\xi||b_{1jk}(y)p_{1jk}(y)|)^{-{1 \over p}}\,dy_n \tag{2.19}\]
By simply taking absolute values and integrating, we also have
\[\bigg|\int_J \tilde{\phi}_{jk}(y)J_{jk}(y)e^{-i|\xi|(c_1b_{1jk}(y)p_{1jk}(y) + ... + c_{n+1}b_{n+1\,jk}(y)p_{n+1\,jk}(y))}\,dy\bigg| \leq C_1 \int_{J'} J_{jk}(y) \,dy \tag{2.20}\]
Combining $(2.19)$ and $(2.20)$ then gives
\[\bigg|\int_J \tilde{\phi}_{jk}(y)J_{jk}(y)e^{-i|\xi|(c_1b_{1jk}(y)p_{1jk}(y) + ... + c_{n+1}b_{n+1\,jk}(y)p_{n+1\,jk}(y))}\,dy\bigg|\]
\[\leq C_2 \int_{J' } J_{jk}(y) \min(1, (|\xi||b_{1jk}(y)p_{1jk}(y)|)^{-{1 \over p}})\,dy \tag{2.21}\]
Since $1 \leq p \leq n+1$, the right-hand side of $(2.21)$ is maximized for $p = n + 1$. Inserting $p = n + 1$ into $(2.21)$, then
adding $(2.21)$ over the boundedly many intervals $J$ comprising $J'$, and then integrating the result in 
the $y_1,...,y_{m-1}$ variables leads to the following.
\[\bigg|\int_{(-1,1)^{n-1} \times J'} \tilde{\phi}_{jk}(y)J_{jk}(y)e^{-i|\xi|(c_1b_{1jk}(y)p_{1jk}(y) + ... + c_{n+1}b_{n+1\,jk}(y)p_{n+1\,jk}(y))}\,dy\bigg|\]
\[\leq C \int_{(-1,1)^{n-1} \times J'} J_{jk}(y) \min(1, (|\xi||b_{1jk}(y)p_{1jk}(y)|)^{-{1 \over p}})\,dy \tag{2.22}\]
Adding $(2.22)$ over all  intervals $J'$, for some $\delta_0 > 0$ we get
the following bound for $|b_{jk}(\xi)|$ in $(2.7)$:
\[|b_{jk}(\xi)| \leq C_3\int_{(-1,1)^{n-1} \times (-\delta_0,\delta_0) } J_{jk}(y) \min(1, (|\xi||b_{1jk}(y)p_{1jk}(y)|)^{-{1 \over p}})\,dy \tag{2.23}\]
The form of $(2.23)$ is such that if we replace $\delta_0$ by some $\delta_1 < \delta_0$, then inequality $(2.23)$
  will still be valid, albeit with a different constant. So for any such $\delta_1$ we have
\[|b_{jk}(\xi)| \leq C_{\delta_1} \int_{(-1,1)^{n-1} \times (-\delta_1,\delta_1)} J_{jk}(y) \min(1, (|\xi||b_{1jk}(y)p_{1jk}(y)|)^{-{1 \over p}})\,dy \tag{2.24}\]
In particular, we may assume $\delta_1$ is small enough so that the pullback of $(-1,1)^{n-1} \times [-\delta_1,\delta_1]$ under the coordinate changes
of the above resolution of singularities is contained in a set $\{x: |x| < r\}$ on which one has an asymptotic development of the form $(1.1)$ for
$g_1(x)$. We convert $(2.24)$ back into the original $x$ coordinates through
these coordinate changes, turning $b_{1jk}(y)p_{1jk}(y)$ back into $g_1(x)$. We get that for some $C_4, r > 0$ we have
\[|b_{jk}(\xi)| \leq C_4\int_{\{x: |x| < r\}} \min(1,(|\xi||g_1(x)|)^{-{1 \over n + 1}})\,dx \tag{2.25}\]
Adding this over all $j$ and $k$ then gives the following bound on the original Fourier transform $\widehat{\phi(x)\mu}(\xi)$ that we are bounding for those
$\xi$ for which ${\xi \over |\xi|} \in A_{v_1,...,v_{n+1},\epsilon} = \{c_1v_1 + ... + c_{n+1}v_{n+1}: |c_1| \leq 1, |c_{i+1}| \leq \epsilon|c_i|$ for all $i \geq 1\}$.
\[|\widehat{\phi(x)\mu}(\xi)| \leq C_5\int_{\{x: |x| < r\}} \min(1,(|\xi||g_1(x)|)^{-{1 \over n + 1}})\,dx \tag{2.26}\]
Recall that $g_1(x)$ is of the form $v_1 \cdot (x_1,...,x_n, f(x_1,...,x_n))$, where $f(x)$ is as in $(2.3)$ and $|v_1| = 1$. Recall $f(0) = 0$ and 
$\nabla f(0) = 0$. So the growth rate as in $(1.1)$ of $v_1 \cdot (x_1,...,x_n, f(x_1,...,x_n))$ for $|v_1| = 1$ is given by $(g,k) = (1,0)$ unless 
$v_1$ is $(0,...,0, \pm 1)$, in which case $v_1 \cdot (x_1,...,x_n, f(x_1,...,x_n)) = \pm f(x_1,...,x_n)$ and the growth rate is slower. Thus the decay rate of
the  right hand side of $(2.26)$ is slowest  when $g_1(x) = f(x)$. Hence we have
\[|\widehat{\phi(x)\mu}(\xi)| \leq C_6 \int_{\{x: |x| < r\}} \min(1,(|\xi||f(x)|)^{-{1 \over n + 1}})\,dx \tag{2.27}\]
We now are in a position to prove parts 1 - 3 of Theorem 1.3. It is natural to break the right-hand side $(2.25)$ into $|f(x)| > {1 \over |\xi|}$ and
 $|f(x)| < {1 \over |\xi|}$ parts. We obtain
 \[|\widehat{\phi(x)\mu}(\xi)| \leq C_6 m(\{x: |x| < r, |f(x)| < |\xi|^{-1}\}) + C_6|\xi|^{-{1 \over n+1}}\int_{\{x: |x| < r,\, |f(x)| > |\xi|^{-1}\}}|f(x)|^{-{1 \over n + 1}}\,dx \tag{2.28}\]
 If $(g_0,k_0)$ denotes the growth rate of $f(x)$ as in $(1.1)$, then the above gives for $|\xi| > 2$ that
 \[|\widehat{\phi(x)\mu}(\xi)| \leq C_7|\xi|^{-g_0}(\ln |\xi|)^{k_0} + C_6|\xi|^{-{1 \over n+1}}\int_{\{x: |x| < r,\,|f(x)| > |\xi|^{-1}\}}|f(x)|^{-{1 \over n + 1}}\,dx \tag{2.29}\]
We apply to $|f(x)|^{-1}$ the  characterization of integrals of powers of functions in terms of their distribution functions. Then
the integral in $(2.29)$ becomes
\[{1 \over n + 1}\int_{|\xi|^{-1}}^{\infty}t^{-{{1 \over n + 1} - 1}}m(x: |x| < r,\, |\xi|^{-1}
< |f(x)| < t\})\,dt \tag{2.30a}\]
\[ \leq {1 \over n + 1}\int_{|\xi|^{-1}}^{{1 \over 2}}t^{-{{1 \over n + 1} - 1}}m(x: |x| < r,\, |\xi|^{-1}
< |f(x)| < t\})\,dt + C_8 \tag{2.30b}\]
Thus by $(1.1)$, we have that $(2.30b)$ is bounded by
\[C_9 \int_{|\xi|^{-1}}^{{1 \over 2}}t^{-{{1 \over n + 1} + g_0 - 1}}|\ln t|^{k_0} \,dt  + C_8\tag{2.31}\]
Given $(2.31)$ and the fact that first term in $(2.29)$ is bounded by $C_7|\xi|^{-g_0}(\ln |\xi|)^{k_0}$, we conclude that
\[|\widehat{\phi(x)\mu}(\xi)| \leq C_{10}\bigg(|\xi|^{-g_0}(\ln |\xi|)^{k_0} + |\xi|^{-{1 \over n+1}} +  |\xi|^{-{1 \over n + 1}} \int_{|\xi|^{-1}}^{{1 \over 2}}t^{-{{1 \over n + 1} + g_0 - 1}}|\ln t|^{k_0}) \,dt \bigg) \tag{2.32}\]
If $g_0 < {1 \over n + 1}$, equation $(2.32)$ leads to
$|\widehat{\phi(x)\mu}(\xi)|  \leq C_{11}|\xi|^{-g_0}(\ln |\xi|)^{k_0}$, which is the desired estimate for part 1 of Theorem 1.3. If $g_0 = {1 \over n + 1}$
we gain an additional logarithmic factor, giving part 2. If $g_0 > {1 \over n + 1}$, we get that
$|\widehat{\phi(x)\mu}(\xi)|$ is bounded a constant times $|\xi|^{-{1 \over n + 1}}$, the desired estimate for part 3 of Theorem 1.3. This concludes the 
proof of parts 1 through 3 of Theorem 1.3. By the discussion above the statement of Theorem 1.2, parts 1 through 3 of Theorem 1.2 then follow. \qed

\section {The proof of parts 4 of Theorem 1.2 and 1.3.}

We first prove part 4 of Theorem 1.3. Rotating and translating coordinates if necessary, we may assume without loss of generality that $x_0 = 0$
 and the vector $(0,...,0,1)$ is normal to $\partial D$ at $0$. In the setting of part 4 of Theorem 1.3, $g_{x_0} = g_{0} < 1$ and if the support of $\phi$ is 
 sufficiently small, which we may assume, we have
\[\widehat{\phi(x)\mu}(\xi) = \int_{\R^n}\phi_0(x_1,...,x_n) e^{-i\xi_{n+1}f(x_1,...,x_n) - i\xi_1x_1 -... -i\xi_nx_n}\,dx_1...\,dx_n \tag{3.1}\]
Here $\partial D$ is the graph of $f(x)$ and $\phi_0(x)$ is a smooth, compactly supported, nonnegative, and positive function on a neighborhood of the origin. 
To prove part 4 of Theorem 1.3, we let $g'$ satisfy $g_0 < g' < 1$ and suppose that the estimate $|\widehat{\phi(x)\mu}(\xi) | \leq C|\xi|^{g'}$
holds for sufficiently large $|\xi|$. We will arrive at a contradiction. 

\noindent We examine $(3.1)$ in the $(0,...,0,1)$ direction:
\[ \bigg|\int_{\R^n}\phi_0(x)e^{-i\xi_{n+1}f(x)}\,dx\bigg| \leq C (1 + |\xi_{n+1}|)^{-g'} \tag{3.1'}\]

The following is largely taken from the proof of part 2 of Theorem 1.1 of [G4]. Denote the integral on the left of $(3.1')$ by $I(\xi_{n+1})$.
Let $B(x)$ be a smooth function on $\R$  whose Fourier transform is nonnegative, compactly supported, and equal to 1 on a 
neighborhood of the origin, and let $\epsilon$ be a small  positive number. If $0 <  g'' < g'$, then $(3.1')$ implies
 that for some constant $A$ independent of $\epsilon$ one has
\[\int_{\R}|I(\xi_{n+1}) \xi_{n+1}^{g''  - 1}B(\epsilon \xi_{n+1})|\,d \xi_{n+1} < A \tag{3.2}\]
As a result we have
\[\bigg|\int_{\R^{n+1}} e^{-i\xi_{n+1}f(x)} \phi_0(x)|\xi_{n+1}|^{g''  - 1}B(\epsilon \xi_{n+1})\,d\xi_{n+1} \,dx\bigg|  <  A \tag{3.3}\]
We do the integral in $\xi_{n+1}$ in $(3.3)$. Letting $b_{\epsilon}(y)$ be the convolution of $|y|^{-g''}$ with ${1 \over \epsilon} \hat{B}({y \over \epsilon})$,  for a constant $A'$ independent of $\epsilon$ we get
\[\bigg|\int_{\R^n}b_{\epsilon} (f(x)) \,\phi_0(x)\,dx\bigg| < A' \tag{3.4}\]
Note that both $b_{\epsilon} (f(x))$ and $\phi_0(x)$ are nonnegative here. Thus we may remove the absolute value and let
 $\epsilon \rightarrow 0$ to obtain
\[\int_{\R^n}|f(x)|^{-g''} \phi_0(x)< \infty  \tag{3.5}\]
Since $ \phi_0(x)$ is bounded below by $C > 0$ on a neighborhood $N$ of the origin, we therefore have
\[\int_{N}|f(x)|^{-g''}\,dx< \infty  \tag{3.6}\]
In other words, $|f(x)|^{-g''}$ is in $L^1(N)$, and therefore in weak $L^1$, and we have the existence of a constant $C$ such that
\[\mu (\{x \in N : |f(x)|^{-g''} > \epsilon \}) \leq C  {1 \over  \epsilon} \tag{3.7}\]
Replacing $ \epsilon$ by $ \epsilon^{-g''}$, we get
\[\mu (\{x \in N: |f(x)| <  \epsilon \}) \leq C  \epsilon^{g''} \tag{3.8}\]
However $0 < g'' < g'$ is arbitary, so since $g' > g_{x_0}$ we may select $g'' > g_{x_0}$. This contradicts that $(1.1)$ holds for all sufficiently small $r > 0$.
 Thus we have arrived at a contradiction and the proof of part 4 of Theorem 1.3 is complete.

Moving now to the proof of part 4 of Theorem 1.2, suppose $g < 1$ and there exists some $g' > g$ such that the estimate $|\widehat{\chi_D(x)}(\xi)| \leq C|\xi|^{-1 - g'}$ holds for sufficiently large $|\xi|$. We may take $g' < 1$. Let $x_0 \in \partial D$ such that $g_{x_0} < g'$. Like above,
 rotating and translating coordinates if necessary, we may assume that $x_0 = 0$ and  $(0,...,0,1)$ is normal to $\partial D$ at $x_0$. Let 
 $\psi(x)$ be a nonnegative
cutoff function defined on a neighborhood of $0$ for which $(1.12)$ is valid, and such that $\psi(x) > 0$ on a neighborhood of $0$. Then the estimate $|\widehat{\chi_D}(\xi)| \leq C|\xi|^{-1 - g'}$ implies, by looking at the Fourier transform of the product as a convolution, that $|\widehat{\chi_D(x) \psi(x)}(\xi)| 
\leq C|\xi|^{-1 - g'}$ also holds. In particular, it holds when $\xi$ is in the $(0,...,0,1)$ direction.

Examining $(1.12)-(1.14)$ in this direction, we observe that the  last two terms in $(1.14)$ decay at a rate of at least $C|\xi|^{-2}$, 
so $|\widehat{\chi_D(x) \psi(x)}(\xi)| \leq C|\xi|^{-1 - g'}$ implies that 
 the first term of $(1.14)$ must be bounded by $C'|\xi|^{-1 - g'}$. Thus the absolute value of the integral in the first term is bounded by $C'|\xi|^{-g'}$
 in this direction. But this integral is exactly a surface measure Fourier transform of the form $(3.1)$, with $\phi_0(x)$ replaced by $\psi_1(x)$. So 
the argument above shows that the bound of $C'|\xi|^{-g'}$ leads to a contradiction. This completes the proof of Theorem 1.2.
\qed
\section{The Proof of Theorem 1.4.}

\subsection {The proof of part 1 of Theorem 1.4.} 

We assume we are in the setting of part 1 of Theorem 1.4. So the index $h$ is equal to ${1 \over m + \gamma}$ for a nonnegative integer $m$ and some
$0 < \gamma \leq 1$, and $s_1(x),...,s_m(x)$ are linearly independent real analytic functions on a neighborhood of the origin in $\R^n$ with each 
$s_i(0) = 0$. Let $s(x)$ denote $(s_1(x),...,s_m(x))$. In order
to prove part 1 of Theorem 1.4, it suffices to show there exist positive constants $r_1, \delta$ and $C$ such that for each $v \in \R^m$ with 
$|v| < \delta$, one has
\[ m(\{x: |x| < r_1,\,|q(x) + (s(x) \cdot v)| < \epsilon\})  \leq C \epsilon^h |\ln \epsilon|^l \tag{4.1}\]
For mutually orthogonal unit vectors $v_1,...,v_m$, write ${\bf v} = (v_1,...,v_m)$. Suppose for each such ${\bf v}$ there is an 
 $\zeta_{\bf v}> 0$ such that there exists a set $D_{v_1,...,v_m,\zeta_{\bf v}} = \{v: v = c_1v_1 + ... + c_m v_m$ with $|c_1| \leq \zeta_{\bf v}$, 
$|c_{i+1}| \leq \zeta_{\bf v}|c_i|$ for all $i\}$ such that $(4.1)$ holds for all $v \in D_{v_1,...,v_m,\zeta_{\bf v}}$ for some $r_1 > 0$. Then by applying Lemma 2.4 to the dilates
${1 \over \zeta_{\bf v}}D_{v_1,...,v_m,\zeta_{\bf v}}$, there is a $\delta > 0$ such that the $v$ with  $|v| < \delta$ is a subset of a finite union of such
$D_{v_1,...,v_m,\zeta_{\bf v}}$. So in order to prove part 1 of Theorem 1.4 we may assume that $v_1,...,v_m$
are fixed and we are seeking such a set  $D_{v_1,...,v_m,\zeta_{\bf v}}$. Letting $t_i(x)  = s(x) \cdot v_i $, the goal is then to show that there exist some 
$C, \zeta_{\bf v}, r_1 > 0$  such that if $|c_1| \leq \zeta_{\bf v}$ and $|c_{i+1}| \leq \zeta_{\bf v}|c_i|$ for all $i$ then we have
\[ m(\{x: |x| < r_1,\,|q(x) + c_1 t_1(x) + ... + c_m t_m(x)| < \epsilon\})  \leq C \epsilon^h |\ln \epsilon|^l \tag{4.2}\]

\noindent Rewrite the left-hand side of $(4.2)$ as
\[\int_{\{x: |x| < r_1\}} \chi_{\{x: |q(x) + c_1 t_1(x) + ... + c_m t_m(x)| < \epsilon\}}(x)\,dx \tag{4.3}\]
We now argue analogously to after $(2.4)$, with the functions $g_1(x),...,g_{n+1}(x)$ replaced by $q(x), t_1(x),...,t_m(x)$. 
In analogy with $(2.7)$ we are led to bounding finitely many terms of the following form.
\[ \beta_{jk}(\epsilon) = \int_{(-1,1)^{n-1} \times \R}\tau_{jk}(y)J_{jk}(y) \chi_{\{y: |B_{1jk}(y)P_{1jk}(y) + c_1B_{2jk}(y)P_{2jk}(y)   ... + c_mB_{m+1jk}(y)P_{m+1jk}(y) | < \epsilon\}}(y)\,dy \tag{4.4}\]
Here $\tau_{jk}(y)$ is a bounded nonnegative function with compact support, $J_{jk}(y)$ is the Jacobian of the coordinate change into blown up coordinates, $B_{1jk}(y)P_{1jk}(y)$ is 
the function $q(x)$ in blown up coordinates, and for $i > 1$,  $B_{ijk}(y)P_{ijk}(y)$ is $t_{i-1}(x)$ in the blown up coordinates. Analogous to in section 2,
the $P_{ijk}(y)$ are monomials whose exponent multiindices are lexicographically ordered in some ordering, and the $B_{ijk}(y)$ are real analytic functions whose absolute values are bounded above and below by a  constant. 

Like in section 2, we integrate in $y_n$ first, breaking up each dyadic interval into boundedly many parts on which one may apply the measure
version of the Van der Corput lemma, Lemma 2.3. There is a slight variation here, in that instead of using lower bounds on some  
$p$th derivative for $1 \leq p \leq m+1$ in conjunction with Lemma 2.1 or Lemma 2.2, we use lower bounds on a $p$th derivative with $0 \leq p \leq m$
in conjunction with Lemma 2.3. Specifically, in place of $(2.15)$, 
we use that if the $\zeta$ defining the $c_i$ is small enough, then for each fixed $y_1,...,y_{n-1}$ each dyadic $y_n$ interval can be written as the finite
union of subintervals, uniformly bounded in number, on each of which for some $0 \leq p \leq m$ we have
\[\bigg|{d^p \over d y_n^p}\bigg(B_{1jk}(y)P_{1jk}(y) + c_1B_{2jk}(y)P_{2jk}(y)   ... + c_mB_{m+1jk}(y)P_{m+1jk}(y) \bigg) \bigg| \]
\[> {\eta \over 16} {1 \over |y_n|^p} (|B_{1jk}(y)P_{1jk}(y)|+ |c_1B_{2jk}(y)P_{2jk}(y)| +   ... + |c_mB_{m+1jk}(y)P_{m+1jk}(y)|)\tag{4.5}\]
Since the $c_i$ can be arbitrarily small, even zero, we will only use the first term on the right-hand side of $(4.5)$. So writing $\eta_0 = {\eta \over 16}$ what we will be using is
\[\bigg|{d^p \over d y_n^p}\bigg(B_{1jk}(y)P_{1jk}(y) + c_1B_{2jk}(y)P_{2jk}(y)   ... + c_mB_{m+1jk}(y)P_{m+1jk}(y) \bigg) \bigg| > \eta_0 {1 \over |y_n|^p} |B_{1jk}(y)P_{1jk}(y)| \tag{4.6}\]
Let $I$ denote a $y_n$ interval on which $(4.6)$ holds. Let $y_n'$ be the center of $I$. Then we have that  ${1 \over |y_n|^p} |B_{1jk}(y)P_{1jk}(y)|$ $ \sim $
${1 \over |y_n'|^p} |B_{1jk}(y')P_{1jk}(y')|$ on $I$. So if $p > 0$, using $(4.6)$, Lemma 2.3 says that on $I$ we have
\[ m(\{y_n:  |B_{1jk}(y)P_{1jk}(y) + c_1B_{2jk}(y)P_{2jk}(y)   ... + c_mB_{m+1jk}(y)P_{m+1jk}(y) | < \epsilon\}) \]
\[< C_1\epsilon^{1 \over p}
|y_n'| |B_{1jk}(y')P_{1jk}(y')|^{-{1 \over p}} \tag{4.7}\]
So if $p > 0$, in view of $(4.7)$ and using that ${1 \over |y_n|^p} |B_{1jk}(y)P_{1jk}(y)| \sim {1 \over |y_n'|^p} |B_{1jk}(y')P_{1jk}(y')|$, that $\tau_{jk}(y)$ is bounded, 
and that $J_{jk}(y)$ is comparable to a fixed value on any dyadic interval since it is comparable to a monomial, if $I'$ denotes the $y_n$ dyadic interval containing $I$ we have
\[\int_I \tau_{jk}(y)J_{jk}(y) \chi_{\{y: |B_{1jk}(y)P_{1jk}(y) + c_1B_{2jk}(y)P_{2jk}(y)   ... + c_mB_{m+1jk}(y)P_{m+1jk}(y) | < \epsilon\}}(y)\,dy_n\]
\[\leq  C_2 \int_{I'}J_{jk}(y)\epsilon^{1 \over p}|B_{1jk}(y)P_{1jk}(y)|^{-{1 \over p}} \,dy_n\tag{4.8}\]
Simply taking absolute values in the left-hand side of $(4.8)$ and integrating leads to
\[\int_I \tau_{jk}(y)J_{jk}(y) \chi_{\{y: |B_{1jk}(y)P_{1jk}(y) + c_1B_{2jk}(y)P_{2jk}(y)   ... + c_mB_{m+1jk}(y)P_{m+1jk}(y) | < \epsilon\}}(y)\,dy_n\]
\[\leq  C_3 \int_{I'} J_{jk}(y)\,dy_n \tag{4.9}\]
Combining $(4.8)$ and $(4.9)$ results in 
\[\int_I \tau_{jk}(y)J_{jk}(y) \chi_{\{y: |B_{1jk}(y)P_{1jk}(y) + c_1B_{2jk}(y)P_{2jk}(y)   ... + c_mB_{m+1jk}(y)P_{m+1jk}(y) | < \epsilon\}}(y)\,dy_n\]
\[\leq  C_4 \int_{I'}J_{jk}(y)\min(1,\epsilon^{1 \over p}|B_{1jk}(y)P_{1jk}(y)|^{-{1 \over p}}) \,dy_n\tag{4.10}\]
The above holds when $1 \leq p \leq m$, and the right-hand integrand is largest when $p = m$, so in such situations $(4.10)$ implies
\[\int_I \tau_{jk}(y)J_{jk}(y) \chi_{\{y: |B_{1jk}(y)P_{1jk}(y) + c_1B_{2jk}(y)P_{2jk}(y)   ... + c_mB_{m+1jk}(y)P_{m+1jk}(y) | < \epsilon\}}(y)\,dy_n\]
\[\leq  C_4 \int_{I'}J_{jk}(y)\min(1,\epsilon^{1 \over m}|B_{1jk}(y)P_{1jk}(y)|^{-{1 \over m}}) \,dy_n\tag{4.11}\]
Although the above assumed $p > 0$, $(4.11)$ will still hold if $p = 0$ in $(4.6)$. For if $(4.6)$ holds with $p = 0$, then if $|B_{1jk}(y)P_{1jk}(y) + c_1B_{2jk}(y)P_{2jk}(y)   ... + c_mB_{m+1jk}(y)P_{m+1jk}(y) | < \epsilon$ at any point in $I$, then by $(4.6)$ at that point one has $|B_{1jk}(y)P_{1jk}(y)| < {1 \over \eta_0}\epsilon$. As a result,
$\epsilon^{1 \over m}|B_{1jk}(y)P_{1jk}(y)|^{-{1 \over m}} > \eta^{1 \over m}$ at this point, in which case $\epsilon^{1 \over m}|B_{1jk}(y)P_{1jk}(y)|^{-{1 \over m}} > C\eta^{1 \over m}$ over all of $I'$ for some $C$. Hence $\min(1,\epsilon^{1 \over m}|B_{1jk}(y)P_{1jk}(y)|^{-{1 \over m}})$ is
bounded by a constant on $I'$ and $(4.11)$ holds simply by taking absolute values inside the integral and integrating. 

So we assume that $(4.11)$ holds regardless of what $p$ is. Adding this over the boundedly many intervals $I$ comprising $I'$, $(4.11)$ leads to
\[\int_{I'} \tau_{jk}(y)J_{jk}(y) \chi_{\{y: |B_{1jk}(y)P_{1jk}(y) + c_1B_{2jk}(y)P_{2jk}(y)   ... + c_mB_{m+1jk}(y)P_{m+1jk}(y) | < \epsilon\}}(y)\,dy_n\]
\[\leq  C_5 \int_{I'}J_{jk}(y)\min(1,\epsilon^{1 \over m}|B_{1jk}(y)P_{1jk}(y)|^{-{1 \over m}}) \,dy_n\tag{4.12}\]
We then add $(4.12)$ over all dyadic intervals $I'$ and then integrate the result in the $y_1,...,y_{n-1}$ variables. The result is that for some $\delta_0 > 0$
we have
\[\int_{[-1,1]^{n-1} \times \R} \tau_{jk}(y)J_{jk}(y) \chi_{\{y: |B_{1jk}(y)P_{1jk}(y) + c_1B_{2jk}(y)P_{2jk}(y)   ... + c_mB_{m+1jk}(y)P_{m+1jk}(y) | < \epsilon\}}(y)\,dy\]
\[\leq  C_5 \int_{[-1,1]^{n-1} \times [-\delta_0,\delta_0]}J_{jk}(y)\min(1,\epsilon^{1 \over m}|B_{1jk}(y)P_{1jk}(y)|^{-{1 \over m}}) \,dy\tag{4.13}\]
Note that the left hand side of $(4.13)$ is the function $\beta_{jk}(\epsilon)$ of $(4.4)$, and recall that in order to prove part 1 of Theorem 1.4 we must 
show that each $\beta_{jk}(\epsilon)$ is bounded by the right hand side of $(1.18)$, namely $C \epsilon^h |\ln \epsilon|^l$, where $(h,l)$ is as in $(1.16)$.
Equation $(4.13)$ is analogous to $(2.22)$, and we can argue as we did after $(2.22)$. The result is the following analogue of $(2.26)$.
\[\beta_{jk}(\epsilon) \leq C_6 \int_{\{x: |x| < r_1\}}\min(1, \epsilon^{1 \over m}|q(x)|^{-{1 \over m}})\,dx \tag{4.14}\]
Then the steps from $(2.26)$ onwards lead to a bound $C \epsilon^h |\ln \epsilon|^l$ so long as $h < {1 \over m}$, which holds by the definition of $h$ as 
${1 \over m + \gamma}$ with $0 < \gamma \leq 1$. Thus we have the needed bound and part 1 of Theorem 1.4 follows.

\subsection{The proof of part 2 of Theorem 1.4}

We consider the function $f(x_1,...,x_n) = x_1^{2k} + ... + x_n^{2k}$ where $k$ is a positive integer.
An easy computation reveals that for $f(x)$, the pair $(h,l)$ is given by $({n \over 2k},0)$. Let $t$ be any positive rational number and write $t = {a \over b}$ 
where $a$ and $b$ are positive integers. Let $u = {ca \over b}$ where $c$ is the smallest positive integer such that $u  > 1$. Then
since $u = {2ca \over 2b}$, we can let $n = 2ca$ and $k = b$ and we have that $f(x_1,...,x_n) = x_1^{2k} + ... + x_n^{2k}$ has as its $(h,l)$ the pair
$(u,0)$. 

Next, note that for any $\delta > 0$, however small, $f(x_1,...,x_n) + \delta x_1$ has $(1,0)$ as its $(h,l)$. Since $u > 1$, this means that the pair
$(h,l)$ corresponding to $f(x_1,...,x_n)$ gets worse for any such perturbation. Now let $q(x) = (f(x))^c$. Then the pair $(h,k)$ corresponding to
$q(x)$ is $({u \over c}, 0) = ({a \over b},0) = (t,0)$, and for any $\delta > 0$ the corresponding pair for $(f(x) + \delta x_1)^c$ is $(1/c,1)$, which is worse
since $u > 1$.
Hence one does not have uniformity in the sense of Theorem 1.4 part 1 for $q(x)$ where the space of perturbation functions is the $c$ dimensional
space spanned by the $f(x)^k x_1^{c-k}$ for $0 \leq k \leq c - 1$. Hence for the function $q(x)$, the conclusion of part 1 of Theorem 1.4 does not hold
for $c$ dimensional perturbation families, and in fact one has the stronger fact that the index $h$ gets worse for these perturbations.

Since $c$ is defined to be the smallest integer such that $ct > 1$, if one writes $t = {1 \over m + \gamma}$ where $0 \leq \gamma < 1$ as in the 
statment of part 2 of Theorem 1.4, then $c = m + 1$. Therefore the statement of part 2 of Theorem 1.4 follows and we are done. \qed

\section{References}

\noindent [AGuV] V. Arnold, S. Gusein-Zade, A Varchenko, {\it Singularities of differentiable maps
Volume II}, Birkhauser, Basel, 1988.  \setlength{\parskip}{0.3 em}

\noindent [BoIw] E. Bombieri, H. Iwaniec, {\it On the order of $\zeta({1 \over 2} + it)$}, Ann. Scuola Norm. Sup. Pisa. Cl. Sci. (4) {\bf 13} (1986) 449-472.

\noindent [B] J. Bourgain, {\it Averages in the plane over convex curves and maximal operators},
J. Anal. Math. {\bf 47} (1986), 69-85. 

\noindent [BWa] J. Bourgain, N. Watt, {\it Mean square of zeta function, circle problem and divisor problem revisited}, preprint. arxiv 1709.04340.

\noindent [BraHoI] L. Brandolini, S. Hoffmann, A. Iosevich, {\it Sharp rate of average decay of the Fourier transform of a bounded set},
Geom. Funct. Anal. {\bf 13} (2003), no. 4, 671-680. 

\noindent [BrNaW] J. Bruna, A. Nagel, and S. Wainger, {\it Convex hypersurfaces and Fourier transforms},
Ann. of Math. (2) {\bf 127} no. 2, (1988), 333-365.  

\noindent [BuDeIkM] S. Buschenhenke, S. Dendrinos, I. Ikromov,  D. M\"uller, {\it Estimates for maximal functions associated to hypersurfaces in
 $\R^3$ with height $h<2$: Part I}, Trans. Amer. Math. Soc. {\bf 372} (2019), no. 2, 1363-1406.

\noindent [C] M. Christ, {\it Hilbert transforms along curves. I. Nilpotent groups}, Annals of Mathematics (2) {\bf 122} (1985), no.3, 575-596.

\noindent [CoDiMaM] M. Cowling, S. Disney, G. Mauceri, and D. M\"uller {\it Damping oscillatory integrals}, 
Invent. Math. {\bf 101}  (1990),  no. 2, 237-260.

\noindent [CoMa1] M. Cowling and G. Mauceri, {\it Inequalities for some maximal functions. II}, Trans. Amer. Math.
Soc., {\bf 296} (1986), no. 1, 341-365.

\noindent [CoMa2] M. Cowling, G. Mauceri, {\it Oscillatory integrals and Fourier transforms of surface carried measures},
Trans. Amer. Math. Soc.  {\bf 304} (1987), no. 1, 53-68.

\noindent [DK] J. DeMailly, J. Kollar, {\it Semicontinuity of complex singularity exponents and K\"ahler-Einstein metrics on Fano orbifolds},
 Ann. Sci. Ecole Norm. Sup. (4) {\bf 34} (2001), no. 4, 525-556.

\noindent [G1] M. Greenblatt, {\it Resolution of singularities, asymptotic expansions of oscillatory 
integrals, and related Phenomena}, J. Analyse Math. {\bf 111} no. 1 (2010) 221-245.

\noindent [G2] M. Greenblatt, {\it Oscillatory integral decay, sublevel set growth, and the Newton 
polyhedron}, Math. Annalen {\bf 346} (2010) no. 4, 857-890. 

\noindent [G3] M. Greenblatt, {\it Fourier transforms of irregular mixed homogeneous hypersurface measures},  Math. Nachr. {\bf 291} (2018), no. 7, 1075-1087. 

\noindent [G4] M. Greenblatt, {\it $L^p$ Sobolev regularity of averaging operators over hypersurfaces and the Newton polyhedron},  J. Funct. Anal. {\bf 276} (2019), no. 5, 1510-1527. 

\noindent [G5] M. Greenblatt, {\it Maximal averages over hypersurfaces and the Newton polyhedron}, J. Funct. Anal. {\bf 262} (2012), no. 5, 2314-2348. 

\noindent [G6] M. Greenblatt, {\it $L^p$ Sobolev regularity for a class of Radon and Radon-like transforms of various codimension},
 J. Fourier Anal. Appl. {\bf 25} (2019), no. 4, 1987-2003. 

\noindent [G7] M. Greenblatt, {\it Smooth and singular maximal averages over 2D hypersurfaces and associated Radon transforms},
 Adv. Math. {\bf 377} (2021), 107-465.
 
\noindent [Gr] A. Greenleaf, {\it Principal curvature and harmonic analysis}, Indiana Univ. Math. J. {\bf 30} (1981), no. 4, 519-537.

\noindent [Ha1] G. H. Hardy, {\it On the expression of a number as the sum of two squares},  Quart. J. Math. {\bf 46} (1915), 263-283.

\noindent [Ha2] G. H. Hardy, {\it On Dirichllet's divisor problem}, Proc. London Math. Soc. (2) {\bf 15} (1916) 1-25.

\noindent [He] D.R. Heath-Brown, {\it Lattice points in the sphere}, Number theory in progress, Proc. Number
Theory Conf. Zakopane 1997, eds. K. Gy¨ory et al., {\bf 2} (1999), 883-892.

\noindent [H1] H. Hironaka, {\it Resolution of singularities of an algebraic variety over a field of characteristic zero I}, 
 Ann. of Math. (2) {\bf 79} (1964), 109-203.

\noindent [H2] H. Hironaka, {\it Resolution of singularities of an algebraic variety over a field of characteristic zero II},  
Ann. of Math. (2) {\bf 79} (1964), 205-326. 

\noindent [Hl1] E. Hlawka, {\it \"{U}ber Integrale auf konvexen K\"{o}rpern. I.}  Monatsh. Math. {\bf 54} (1950), 1-36. 

\noindent [Hl2] E. Hlawka, {\it \"{U}ber Integrale auf konvexen K\"{o}rpern. II.} Monatsh. Math. {\bf 54} (1950), 81-99. 

\noindent [Hu1] M. N. Huxley, {\it Integer points, exponential sums and the Riemann zeta function}, Number theory for the millennium, II (Urbana, IL, 2000) pp.275-290, A K Peters, Natick, MA, 2002.

\noindent [Hu2] M. N. Huxley, {\it Area, lattice points, and exponential sums},
London Mathematical Society Monographs. New Series, 13. Oxford Science Publications. The Clarendon Press, Oxford University Press, New York, 1996. xii+494 pp. ISBN: 0-19-853466-3.

\noindent [Hu3] M. N. Huxley, {\it Exponential sums and lattice points III}, Proc. London Math. Soc. (3) {\bf 97} (2003),
591-609.

\noindent [I] A. Iosevich, {\it Maximal operators associated to families of flat curves in the plane}, Duke Math. J. {\bf 76} no. 2 (1994) 633-644. 

\noindent [IvKrKuNo] A. Ivi\'c, E. Kr\"{a}tzel, M. K\"{u}hleitner, W.G. Nowak, 
{\it Lattice points in large regions and related arithmetic functions: recent developments in a very classic topic}, Elementare und analytische Zahlentheorie,
Schr. Wiss. Ges. Johann Wolfgang Goethe Univ. Frankfurt am Main {\bf 20} (2006)  89-128.

\noindent [IkKeM] I. Ikromov, M. Kempe, and D. M\"uller, {\it Estimates for maximal functions associated
to hypersurfaces in $\R^3$ and related problems of harmonic analysis}, Acta Math. {\bf 204} (2010), no. 2,
151--271.

\noindent [ISa1] A. Iosevich, E. Sawyer, {\it Maximal averages over surfaces},  Adv. Math. {\bf 132} 
(1997), no. 1, 46-119.

\noindent [ISa2] A. Iosevich, E. Sawyer, {\it Oscillatory integrals and maximal averages over homogeneous
surfaces}, Duke Math. J. {\bf 82} no. 1 (1996), 103-141.

\noindent [ISaSe1] A. Iosevich, E. Sawyer, A. Seeger, {\it Two problems associated with convex
finite type domains}, Publ. Mat. {\bf 46}, no. 1 (2002), 153-177.

\noindent [ISaSe2] A. Iosevich, E. Sawyer, A. Seeger, {\it Mean square discrepancy bounds for the number of lattice points in large convex bodies}, J. Anal. Math. {\bf 87} (2002), 209-230.

\noindent [ISaSe3] A. Iosevich, E. Sawyer, A. Seeger, {\it Mean lattice point discrepancy bounds. II. Convex domains in the plane}, J. Anal. Math. 
{\bf 101} (2007), 25-63. 

\noindent [ISaSe4] A. Iosevich, E. Sawyer, A. Seeger, {\it On averaging operators associated with convex hypersurfaces of finite type}, J. Anal. Math. 
{\bf 79} (1999), 159-187.

\noindent [IwMo] H. Iwaniec and C.J. Mozzochi, {\it On the divisor and circle problems}, J. Number Theory {\bf 29} (1988),
60-93.

\noindent [J1] V. Jarnik, {\it Sur les points \`a coordon\'ees entieres dans le plan},  Bull. int. de l’acad. des sciences de Boh., 1924.

\noindent [J2] V. Jarnik, {\it \"{U}ber Gitterpunkte auf konvexen Kurven}, Math. Z. {\bf 24} (1925), 500-518.

\noindent [Ka1] V. N. Karpushkin, {\it A theorem concerning uniform estimates of oscillatory integrals when
the phase is a function of two variables}, J. Soviet Math. {\bf 35} (1986), 2809-2826.

\noindent [Ka2] V. N. Karpushkin, {\it Uniform estimates of oscillatory integrals with parabolic or 
hyperbolic phases}, J. Soviet Math. {\bf 33} (1986), 1159-1188.

\noindent [Ko] G. Kolesnik, {\it On the method of exponent pairs}, Acta Arith. {\bf 45} (1985) 115-143.

\noindent [Kr1] E. Kr\"{a}tzel, {\it Lattice points. Mathematics and its Applications}, East European Series {\bf 33}, Kluwer Academic Publishers Group, Dordrecht, 1988. 320 pp. ISBN: 90-277-2733-3.

\noindent [Kr2] E. Kr\"{a}tzel, {\it Analytische Funktionen in der Zahlentheorie}, [Analytic functions in number theory]
Teubner-Texte zur Mathematik [Teubner Texts in Mathematics], {\bf 139}. B. G. Teubner, Stuttgart, 2000. 288 pp. ISBN: 3-519-00289-2.

\noindent [L]  E. Landau, {\it \"Uber die Anzahl der Gitterpunkte in gewissen Bereichen}, Nachr. K\"onigl. Ges. Wiss.
G\"ottingen, math.-naturwiss. Kl. (1912), 687-771.

\noindent [Mu] W. M\"uller, {\it Lattice points in large convex bodies},  Monatsh. Math. {\bf 128} (1999), no. 4, 315-330. 

\noindent [N] A. Nadel, {\it Multiplier ideal sheaves and Kahler-Einstein metrics of positive scalar 
curvature},  Ann. of Math. (2)  {\bf 132}  no. 3 (1990), 549--596.

\noindent [NaSeW] A. Nagel, A. Seeger, and S. Wainger, {\it Averages over convex hypersurfaces},
Amer. J. Math. {\bf 115} (1993), no. 4, 903-927.

\noindent [PSSt] D. H. Phong, E. M. Stein, J. Sturm, {\it On the growth and stability of real-analytic functions}, Amer. J. Math, {\bf 121} (1999), 519-554.

\noindent [PSt] D. H. Phong, J. Sturm, {\it Algebraic estimates, stability of local zeta 
functions, and uniform estimates for distribution functions},  Ann. of Math. (2) {\bf 152}
(2000), no. 1, 277-329. 

\noindent [R1] B. Randol, {\it On the Fourier transform of the indicator function of a planar set}, Trans. Amer. Math. Soc. {\bf 139} (1969) 271-278.

\noindent [R2] B. Randol, {\it On the asymptotic behavior of the Fourier transform of the indicator function of a convex set}, Trans. Amer. Math. Soc. 
{\bf 139} (1969) 279-285. 

\noindent [R3] B. Randol, {\it A lattice point problem I}, Trans. Amer. Math. Soc. {\bf 121} (1966), 257-268.

\noindent [R4] B. Randol, {\it A lattice point problem II}, Trans. Amer. Math. Soc. {\bf 125} (1966), 101-113.

\noindent [Sc] H. Schulz, {\it Convex hypersurfaces of finite type and the asymptotics of their Fourier 
transforms}, Indiana Univ. Math. J. {\bf 40} (1991), no. 4, 1267--1275. 

\noindent [Si] W. Sierpi\'nski , {\it O pewnem zagadnieniu z rachunku funckcyi asymptotycnych}, Prace mat.-fiz. {\bf 17} (1906) 77-118.

\noindent [S1] E. Stein, {\it Harmonic analysis; real-variable methods, orthogonality, and oscillatory \hfill\break
integrals}, Princeton Mathematics Series {\bf 43}, Princeton University Press, Princeton, NJ, 1993.

\noindent [S2] E. Stein, {\it Maximal functions. I. Spherical means.} Proc. Nat. Acad. Sci. U.S.A. {\bf 73} (1976), no. 7, 2174-2175. 

\noindent [ShS] R. Shakarchi, E. Stein, {\it Functional analysis. Introduction to further topics in analysis}. Princeton Lectures in Analysis {\bf 4}. Princeton University Press, Princeton, NJ, 2011. xviii+423 pp. ISBN: 978-0-691-11387-6. 

\noindent [So] C. Sogge, {\it Maximal operators associated to hypersurfaces with one nonvanishing princi pal 
curvature} in {\it Fourier analysis and partial differential equations} (Miraflores de 
la Sierra, 1992),  317-323, Stud. Adv. Math., CRC, Boca Raton, FL, 1995. 

\noindent [SoS] C. Sogge and E. Stein, {\it Averages of functions over hypersurfaces in $\R^n$}, Invent. Math. {\bf 82} (1985), no. 3, 543-556.

\noindent [Sv] I. Svensson, {\it Estimates for the Fourier transform of the characteristic function of a convex set}, Ark. Mat. {\bf 9} (1971), 11-22.

\noindent [Sz] G. Szeg\"o, {\it Beitr\"age zur Theorie der Laguerreschen Polynome, II, Zahlentheoretische Anwendungen},
Math. Z. {\bf 25} (1926), 388-404.

\noindent [Tr] O. Trifonov, {\it On the number of lattice points in some two-dimensional domains}, Doklady Bulgar.
Akad. Nauk {\bf 41} (1988), 25-27.

\noindent [Va1] J. G. van der Corput, {\it \"{U}ber Gitterpunkte in der Ebene}, Math. Ann. {\bf 81} (1920) 1-20.

\noindent [Va2]  J. G. van der Corput, {\it Neue zahlentheortische Absch\"{a}tzungen}, Math. Ann. {\bf 89} (1923) 215-254.

\noindent [V] A. N. Varchenko, {\it Newton polyhedra and estimates of oscillatory integrals}, Functional Anal. Appl. {\bf 18} (1976), no. 3,  175-196.

\noindent [Vi] I. M. Vinogradov, {\it On the number of integer points in a sphere} (Russian), Izv. Akad. Nauk SSSR, Ser. Mat. {\bf 27} (1963), 957-968.

\end{document}